\newtheorem{lemma}{Lemma}
\theoremstyle{remark}
\newtheorem{remark}{Remark}
\begin{document}

\title{A high-order integral solver for
  scalar problems of diffraction by screens and apertures in three
  dimensional space}

\author{Oscar P. Bruno and St\'ephane K. Lintner}

\maketitle
\begin{abstract}
  We present a novel methodology for the numerical solution of problems of
  diffraction by infinitely thin screens in three dimensional space. Our
  approach relies on new integral formulations as well as associated
  high-order quadrature rules. The new integral formulations involve {\em
  weighted versions} of the classical integral operators associated with the
  thin-screen Dirichlet and Neumann problems as well as a generalization to
  the open surface problem of the classical Calder\'on formulae.  The
  high-order quadrature rules we introduce for these operators, in turn,
  resolve the multiple Green function and edge singularities (which occur at
  arbitrarily close distances from each other, and which include weakly
  singular as well as \emph{hypersingular} kernels) and thus give rise to
  super-algebraically fast convergence as the discretization sizes are
  increased. When used in conjunction with Krylov-subspace linear algebra
  solvers such as GMRES, the resulting solvers produce results of high
  accuracy in small numbers of iterations for low and high frequencies
  alike. We demonstrate our methodology with a variety of numerical results
  for screen and aperture problems at high frequencies---including
  simulation of classical experiments such as the diffraction by a circular
  disc (including observation of the famous Poisson spot), interference
  fringes resulting from diffraction across two nearby circular apertures,
  as well as more complex geometries consisting of multiple scatterers and
  cavities.
\end{abstract}

\section{Introduction\label{intro}} 
Diffraction problems involving \textit{infinitely thin screens} play central
roles in the field of wave propagation: as a noted early example we mention
the experimental observation of a bright area in the shadow of the disc (the
famous Poisson spot), which provided one of the earliest confirmations of
the wave-theory models of light~\cite[p. xxviii]{Wolf}. Certainly, problems
of diffraction by screens continue to impact significantly on a varied range
of present day technologies, including wireless communications, electronics
and photonics, as well as sonic metamaterials, sound transmission,
non-invasive evaluation and environmental acoustics. Like other wave
scattering problems, screen problems (mathematically described by
\emph{open} surface problems~\cite{Stephan}) can be treated by means of
numerical methods that rely on approximation of the equations of
electromagnetism and acoustics over volumetric domains (on the basis of,
e.g., finite-difference or finite-element methods) as well as methods based
on boundary integral equations. Boundary-integral methods do not suffer from
the well known pollution/dispersion
errors~\cite{BabuskaSauterStefan,Jameson}, they require discretization of
domains of lower dimensionality than those involved in volumetric methods,
and, in spite of the fact that they give rise to full matrices, they can be
treated efficiently, even for high-frequencies, by means of accelerated
iterative scattering
solvers~\cite{BleszynskiBleszynskiJaroszewicz,BrunoKunyansky,Rokhlin,BrunoEllingTurc}.
Unfortunately, integral methods for open surfaces do not give rise, at least
in their classical formulations, to Fredholm integral operators of the
second-kind, and they suffer, like their volumetric counterparts, from
singularities in the vicinity of open edges.  As a result of these
characteristics, both volumetric and boundary-integral numerical methods for
open surface problems have typically proven computationally expensive and
inaccurate.

Generalizing the two-dimensional open-arc method introduced
in~\cite{LintnerBruno,BrunoLintner2}, this paper presents a novel approach
for the treatment of open surface diffraction problems in three-dimensional
space, the benefits of which are two-fold. On one hand, the new method
enjoys high-order accuracy: by considering weighted versions
$\mathbf{S}_\omega$ and $\mathbf{N}_\omega$ of the single-layer operator
$\mathbf{S}$ and hypersingular operator $\mathbf{N}$, and thus explicitly
extracting the solutions' edge singularity, we introduce high-order
quadrature rules (based on the partition of unity
method~\cite{BrunoKunyansky} and a combination of polar and quadratic
changes of variables) which acurately resolve the multiple Green function
and edge singularities that occur at arbitrarily close distances from each
other, and which include weakly singular as well as \emph{hypersingular}
kernels.  On the other hand, the method gives rise to well-behaved linear
algebra: as shown in this paper, the composite operator
$\mathbf{N}_\omega\mathbf{S}_\omega$ (which in the two-dimensional case was
rigorously proven~\cite{LintnerBruno,BrunoLintner2} to be a second-kind
Fredholm operator) requires very small number of iterations when used in
conjunction with the linear iterative solver GMRES. In particular, the
computational times required by our non-accelerated open surface solvers are
comparable to those required by the corresponding non-accelerated version of
the closed surface solver presented in~\cite{BrunoKunyansky}. (An extension
of the acceleration method~\cite{BrunoKunyansky} to the present context,
which is not pursued here, does not present difficulties.) Thus, the present
methodologies enable solution of the classical open surface scattering
problems with an efficiency and accuracy comparable to that available for
the smooth closed-surface counterparts.

The difficulties associated with boundary-integral methods for open surface
problems are of course well-known, and significant efforts have been devoted
to their treatment. The contributions
\cite{PovznerSuharesvki,ChristiansenNedelec} sought to generalize the
Calder\'on relations in the open-surface context as a means to derive
second-kind Fredholm equations for these
problems. In~\cite{PovznerSuharesvki} it was shown that the combination
$\mathbf{N}\mathbf{S}$ can be expressed in the form
$\mathbf{I}+\mathbf{T}_K$, where the kernel $\mathbf{K}(x,y)$ of the
operator $\mathbf{T}_K$ has a \textit{polar} singularity of the type
$O\left(\frac{1}{|x-y|}\right)$. This result is not uniform throughout the
surface, and it does not take into account the singular edge behavior: the
resulting operator $\mathbf{T}_K$ is not compact (in fact it gives rise to
strong singularities at the surface edge~\cite{LintnerBruno}), and the
operator $\mathbf{I} + \mathbf{T}_K$ is therefore not a second-kind operator
in any meaningful functional space. When used in conjunction with boundary
elements that vanish at edges, however, the combination
$\mathbf{N}\mathbf{S}$ can give rise to reduction of iteration numbers, as
demonstrated in reference~\cite{ChristiansenNedelec} through numerical
examples concerning low-frequency problems. The
contribution~\cite{ChristiansenNedelec} does not include details on
accuracy, and it does not utilize integral weights to resolve the solution's
edge singularity.  A related but different method was introduced
in~\cite{AntoineBendaliDarbasmarion} which, once again, exhibits small
iteration numbers at low frequencies, but which does not resolve the
singular edge behavior and for which no accuracy studies have been
presented.  An effective approach for regularization of the singular edge
behavior in the two-dimensional open-arc problem is based on use of a cosine
change of variables; see~\cite{BrunoLintner2} and references therein. In the
three dimensional case under consideration in this paper, high-order
integration rules for the single-layer and hypersingular operators were
introduced in~\cite{Stephan}, but these methods have only been applied to
problems of low frequency, and they have not been used in conjunction with
iterative solvers.

The remainder of this paper is organized as
follows. Section~\ref{sec_prelim} defines the Dirichlet and Neumann problem
on an open surface and it briefly discusses a number of difficulties
inherent in classical formulations; Section~\ref{sec_formulation} introduces
the new weighted operators; and Section~\ref{numer_framework} provides an
outline of the Nystr\"om-based numerical framework on which the solvers are
based.  The next five sections describe the construction of the high-order
numerical approximations we use for weighted operators: Sections~\ref{sec_S}
through~\ref{sec_combined} decompose the operators into six \emph{canonical
integral types}, while Sections~\ref{sec_interior} and~\ref{sec_exterior}
provide high-order integration rules for each one of the canonical
operators.  The selection of certain parameters required by our solvers are
detailed in Section~\ref{param_sel}. Finally, numerical results are
presented in Section~\ref{sec_numerical} which demonstrate the properties of
the integral formulations and solvers introduced in this paper across a
range of frequencies and geometries---including simulation of classical
experiments such as the diffraction by a circular disc (including
observation of the famous Poisson spot), interference fringes resulting from
diffraction across two nearby circular apertures, as well as more complex
geometries consisting of multiple scatterers and cavities.

\section{Open-Surface Acoustic Diffraction Problems}\label{sec_prelim}
Throughout this paper $\Gamma$ denotes a \textit{smooth} open surface (also
called a screen~\cite{Stephan}) with a smooth edge $\partial \Gamma$ in
three-dimensional space.
\subsection{Classical integral equations}
We consider the sound-soft and sound-hard problems of acoustic scattering by the open screen $\Gamma$, that is, the Dirichlet and Neumann boundary value problems
\begin{equation}\label{b_conds}
\left\{ \begin{array}{llll} \Delta u +k^2 u = 0 \quad\mbox{outside}\quad
  \Gamma , & u|_{\Gamma} = f, & \mbox{(sound-soft)},\\
\Delta v +k^2 v = 0 \quad\mbox{outside}\quad \Gamma, & \frac{\partial
  v}{\partial n}|_{\Gamma} = g, & \mbox{(sound-hard)},
\end{array}\right.
\end{equation}
 for the Helmholtz equation, where $u$ and $v$ are radiating functions at
infinity.

As is well known, both boundary-value problems are uniquely solvable in adequate functional spaces~\cite{Stephan}. For $\mathbf{r}$ outside $\Gamma$, the solution of the Dirichlet and Neumann problems can be expressed as a single-layer potential
 \begin{equation}\label{single_pot}
 u(\mathbf{r})= \int_\Gamma G_k(\mathbf{r},\mathbf{r}')\mu(\mathbf{r}')
 dS'
 \end{equation}
and a double-layer potential
 \begin{equation}
 v(\mathbf{r})= \int_\Gamma \frac{\partial
 G_k(\mathbf{r},\mathbf{r}')}{\partial
 \textbf{n}_\mathbf{r}'}\nu(\mathbf{r}') dS' ,
 \end{equation}
respectively, where $G_k$ denotes the free-space Green's function
\begin{equation}\label{Green}
G_k(\mathbf{r},\mathbf{r}')=\frac{e^{ik|\mathbf{r}-\mathbf{r}'|}}{|\mathbf{r}-\mathbf{r}'|},\quad \mathbf{r} \neq \mathbf{r}'.
\end{equation}
Letting $\mathbf{S}$ and $\mathbf{N}$ denote the classical single-layer and hypersingular operators
\begin{equation}\label{Sdef}
 \mathbf{S}[\mu](\mathbf{r})\equiv \int_\Gamma
 G_k(\mathbf{r},\mathbf{r}')\mu(\mathbf{r}) dS', \textbf{ }\quad \mathbf{r}\mbox 
 { on } \Gamma
 \end{equation}
and
 \begin{equation}\label{Ndef}
 \mathbf{N}[\nu](\mathbf{r})\equiv \lim\limits_{z\rightarrow 0} \frac{\partial
 }{\partial \textbf{n}_{\mathbf{r}}}\int_\Gamma \frac{\partial
 G_k(\mathbf{r},\mathbf{r}'+z\textbf{n}_{\mathbf{r}'})}{\partial
 \textbf{n}_\mathbf{r}'}\nu(\mathbf{r}') dS',\textbf{ }\quad \mathbf{r}\mbox 
 { on } \Gamma
 \end{equation}
the densities $\mu$ and $\nu$ are the unique solutions of the integral
equations
\begin{equation}\label{Sbad}
\mathbf{S}[\mu]=f\quad\mbox{and}
\end{equation}
\begin{equation}\label{Nbad}
\quad \mathbf{N}[\nu]=g.
\end{equation}
The integral operators in~\eqref{Sbad} and~\eqref{Nbad} have eigenvalues which accumulate at
zero and infinity respectively, and, thus, solutions of~\eqref{Sbad} and~\eqref{Nbad} by means
of Krylov-subspace iterative solvers such as GMRES generally require large
number of iterations. Furthermore, as discussed in Section~\ref{sec_sing},
the solutions $\mu$ and $\nu$ are singular at the edge of $\Gamma$ and thus
give rise to low order convergence unless such singularities are
appropriately taken into account. 

\subsection{Calder\'on formulation in the case of closed surfaces and shortcomings in a direct extension to open surfaces}\label{sec_calderon}
In the case where the surface $\Gamma_c$ under consideration is closed (that is, it equals the boundary of a bounded set in space), second kind Fredholm equations can be
derived either by making use of the classical jump relations
across the surface of the double-layer potential or the normal derivative of
the single-layer potential~\cite{ColtonKress2}, or, alternatively, by
relying on the Calder\'on formula which establishes that the combination
$\mathbf{N}_c \mathbf{S}_c$ of the closed-surface hypersingular operator
$\mathbf{N}_c$ and single-layer operator $\mathbf{S}_c$ can be expressed in
the form
\begin{equation}\label{CalderonClosed}
\mathbf{N}_c\mathbf{S}_c=-\frac{I}{4}+\mathbf{K}_c,
\end{equation}
where $\mathbf{K}_c$ is a compact operator in a suitable function
space~\cite{Nedelec}.

In the case of an open surface, the requirement~\eqref{b_conds} that the
same limit be achieved on both sides of the surface prevents the use of
discontinuous potentials. And, use the Calder\'on
formula~\eqref{CalderonClosed} does not give rise to a Fredholm equation in
the function spaces associated with open-screen problems: for example, the
composition of $\mathbf{N}$ and $\mathbf{S}$ is not even defined in the
functional framework set forth in~\cite{Stephan}---since, as
shown in~\cite{BrunoLintner2,LintnerBruno}, the image of the operator
$\mathbf{S}$ (the Sobolev space $H^\frac{1}{2}(\Gamma)$) is larger than the
domain of definition of $\mathbf{N}$ (the Sobolev space
$\tilde{H}^{\frac{1}{2}}(\Gamma)$). It is interesting to note, further,
that, as shown in~\cite{LintnerBruno} in the two-dimensional case, the image
of a constant function has a strong edge singularity,
\begin{equation}\label{NSblow}
\mathbf{N}\mathbf{S}[1](\mathbf{r})= O(\frac{1}{d(\mathbf{r})}),
\end{equation}
where $d$ denotes the distance to the edge---which demonstrates the
degenerate character of the composite operator $\mathbf{N}\mathbf{S}$.

\subsection{Singular edge-behavior}\label{sec_sing}
Although issues related to the singularity of the solutions $\mu$ and $\nu$
of equations~\eqref{Sbad} and~\eqref{Nbad} were controversial at
times~\cite{Bethe,BouwkampOnBethe}, the singular character of these
solutions is now well
documented~\cite{Meixner,Maue,BouwkampReview,Stephan,Costabel}. In
particular, in reference~\cite{Costabel} it was shown that $\mu$ and $\nu$
can be expressed in the forms
\begin{equation}\label{CostabelExp}
\mu\sim\frac{\varphi}{\sqrt{d}},\quad\nu\sim\psi\sqrt{d},
\end{equation}
where $\varphi$ and $\psi$ are infinitely differentiable functions throughout
$\Gamma$, \textit{up to and including the edge}. Thus the singular behavior
of the solutions to~\eqref{Sbad} and~\eqref{Nbad} is fully characterized by the factors
$d^{1/2}$ and $d^{-1/2}$ in equation~\eqref{CostabelExp}.

\section{Weighted operators and regularized formulation\label{sec_formulation}}
In view of the regularity results~\eqref{CostabelExp} we introduce a weight
$\omega(\mathbf{r})$ which is \textit{smooth, positive and non vanishing} across the interior of the surface, and which has square-root asymptotic edge behavior:
\begin{equation}\label{asymp}
 \omega \sim d^{1/2}.
\end{equation} 
We then define the weighted operators
\begin{equation}\label{Somega}
\mathbf{S}_\omega[\varphi]=  \mathbf{S}\left[\frac{\varphi}{\omega}\right],\quad\mathbf{N}_\omega[\psi] = \mathbf{N}[\omega\psi],
\end{equation}
so that for functions $f$ and $g$ that are smooth on $\Gamma$, up to and including the edge $\partial \Gamma$, the solutions of the equations
\begin{equation}\label{SGood}
\mathbf{S}_\omega[\varphi]= f,
\end{equation}
and
\begin{equation}\label{NGood}
\quad \mathbf{N}_\omega[\psi]= g
\end{equation}
are also smooth throughout the surface. In view of the closed-surface
Calder\'on formula~\eqref{CalderonClosed}, we consider the combined operator
$\mathbf{N}_\omega\mathbf{S}_\omega$ and the corresponding equations
\begin{equation}\label{NSGoodD}
\mathbf{N}_\omega\mathbf{S}_\omega[\varphi] =
\mathbf{N}_\omega[f]\quad\mbox{and}
\end{equation}
\begin{equation}\label{NSGoodN}
\mathbf{N}_\omega\mathbf{S}_\omega[\psi] = g.
\end{equation}

Note that the solutions of equations~\eqref{SGood} and~\eqref{NGood} are
related to those of~\eqref{NSGoodD} and~\eqref{NSGoodN}:
\begin{equation} 
\mu=\frac{\varphi}{\omega}, \quad \nu = \omega \cdot \mathbf{S}_\omega[ \psi ].
\end{equation}
As shown in~\cite{LintnerBruno,BrunoLintner2} in the case of an open-arc in
two dimensions, the combination $\mathbf{N}_\omega \mathbf{S}_\omega$ gives
rise to second-kind integral equations. In particular, the numerical results presented in~\cite{BrunoLintner2}
show that equations~\eqref{NSGoodD} and ~\eqref{NSGoodN} require
significantly smaller numbers of GMRES iterations than
equations~\eqref{SGood} and~\eqref{NGood} to achieve a given residual
tolerance. As demonstrated in this paper through a variety of numerical examples,
a similar reduction in iteration numbers results for three-dimensional problems as well.

\section{Outline of the proposed Nystr\"om solver\label{numer_framework}}
\subsection{Basic algorithmic structure\label{nystrom}}
In order to obtain numerical solutions of the surface integral
equations~\eqref{SGood}--\eqref{NSGoodN} we introduce an open-surface
version of the closed-surface Nystr\"om solver put forth
in~\cite{BrunoKunyansky}. This algorithm relies on
\begin{enumerate} 
\item A discrete set of nodes $\mathcal{N}= \{\mathbf{r}_i,i=1,\dots,N\}$ on
the surface $\Gamma$, which are used for both \emph{integration} and
\emph{collocation};
\item \label{pt_2} High-order integration rules which, using a given
discrete set of accurate approximate values $(\varphi_i)$ (resp. $(\psi_i)$)
of a smooth surface density $\varphi$, $\varphi_i\sim\varphi(\mathbf{r}_i)$
(resp. $\psi$, $\psi_i\sim\psi(\mathbf{r}_i)$), produce accurate
approximations of the quantities $\mathbf{S}_\omega[\varphi](\mathbf{r}_i)$
(resp. $\mathbf{N}_\omega[\psi](\mathbf{r}_i)$), see Sections~\ref{sec_POU}
through~\ref{efficiency}; and
\item The iterative linear algebra solver GMRES~\cite{saad}, for solution of
the discrete versions of equations~\eqref{SGood}--\eqref{NSGoodN} induced by
the approximations mentioned in point~\ref{pt_2}.
\end{enumerate}
The fact that the same set of Nystr\"om nodes is used for
integration and collocation facilitates evaluation of the composite operator
$\mathbf{N}_\omega \mathbf{S}_\omega$ through simple subsequent application
of the discrete versions of the operators $\mathbf{S}_\omega$ and
$\mathbf{N}_\omega$. 

\begin{figure}
\center
\includegraphics[scale=0.35]{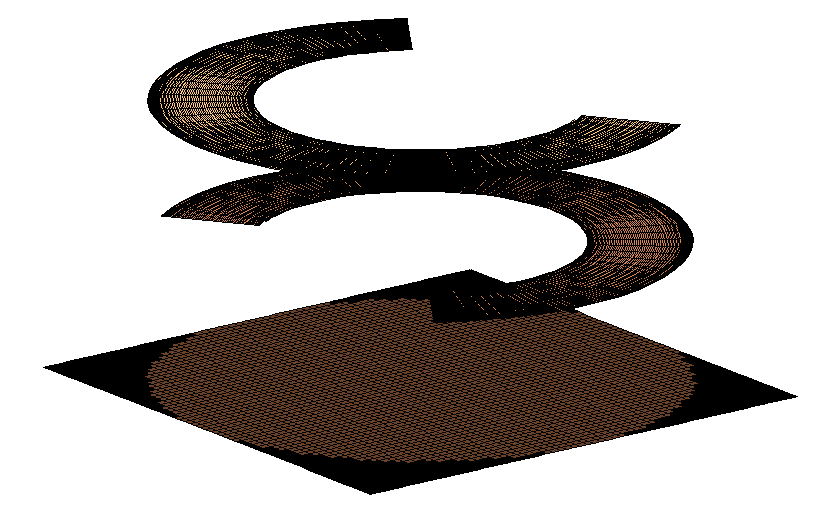}
\includegraphics[scale=0.35]{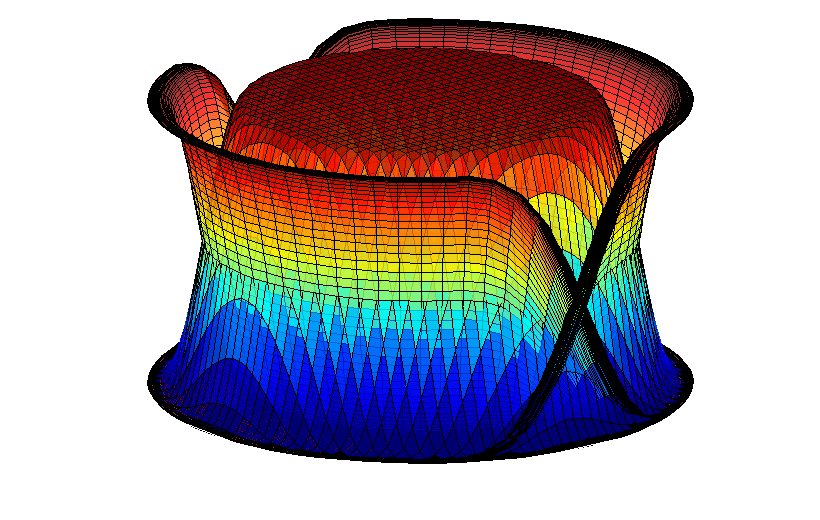}
\caption{Patches, partition of unity and discretization for a disc. Left:
the disc is covered by an interior patch and two edge patches. Right:
partition of unity functions $W^q_i$ supported on the patches. Notice the
quadratic refinement along the edges.  }
  \label{discPOU}
\end{figure}

\subsection{Partition of unity and Nystr\"om nodes}\label{sec_POU}
The integration rules mentioned in point~\ref{pt_2} in Section~\ref{nystrom}
rely on a decomposition of a given open surface (screen) $\Gamma$ as a union
\begin{equation}\label{patch_union}
\left (\bigcup_{q=1}^{Q_1} \mathcal{P}_1^{q}\right)
\bigcup \left (\bigcup_{q=1}^{Q_2} \mathcal{P}_2^{q}\right)
\end{equation}
of overlapping patches, including interior patches $\mathcal{P}_1^{q}$,
$q=1,\dots, Q_1$, and edge patches $\mathcal{P}_2^{q}$, $q=1,\dots,
Q_2$. For each $q$, the interior patch $\mathcal{P}_1^q$ (resp. edge patch
$\mathcal{P}_2^q$) is assumed to be parametrized by an invertible smooth
mapping $\mathbf{r}_1^q = \mathbf{r}_1^q(u,v)$, $\mathbf{r}_1^q: \quad
\mathcal{H}_1^q \rightarrow \mathcal{P}_1^q$ (resp. $\mathbf{r}_2^q=
\mathbf{r}_2^q(u,v)$, $\mathbf{r}_2^q: \quad \mathcal{H}_2^q \rightarrow
\mathcal{P}_2^q$) defined over an open domain $\mathcal{H}_1^q\subset
\mathbb{R}^2$ (resp. $\mathcal{H}_2^q\subset \mathbb{R}^2\bigcap \{v \geq
0\}$). Note that, for the $q$-th edge patch, the restriction of the mapping
$\mathbf{r}_2^q$ to the set $\mathcal{H}_2^q\cap \{v = 0\}$ (which we assume
is non-empty for $q=1,\dots, Q_2$) provides a parametrization of a portion
of the edge of $\Gamma$; see Figure~\eqref{discPOU}.
Following~\cite{BrunoKunyansky}, further, we introduce a partition of unity
(POU) subordinated to the set of overlapping patches mentioned above. In
detail, the POU we use is a set of non-negative functions $W_1^{q}$ and
$W_2^{q}$ defined on $\Gamma$, $q=1,\dots,Q_1$ and $q=1,\dots,Q_2$, such
that, for all $q$, $W_1^{q}$ vanishes outside $\mathcal{P}_1^q$, $W_2^{q}$
vanishes outside $\mathcal{P}_2^q$, and the relation
\[
\sum_{q=1}^{Q_1} W_1^{q} +\sum_{q=1}^{Q_2} W_2^{q} = 1
\]
holds throughout $\Gamma$. 
The POU can be used to decompose the integral of any function over the surface into a patch-wise sum of the form
\begin{equation}\label{dec}
\begin{split}
\int_\Gamma f(\mathbf{r}')dS' =  \sum\limits_{j=1}^2\sum \limits_{q=1}^{Q_j}  \int_{\mathcal{H}_j^q} f\left(\mathbf{r}^q_j(u,v)\right)W_j^q\left(\mathbf{r}^q_j(u,v)\right) J^q_j(u,v)dudv,
\end{split}
\end{equation}
where $J^q_j(u,v)$ denote the Jacobian associated with the
parametrization $\mathbf{r}_j^q$.
 At this stage we define the set of Nystr\"om nodes: introducing, for each
$q$, a tensor-product mesh $\{(u^{q,j}_\ell,v^{q,j}_m)\}$ within
$\mathcal{H}_j^q$ (for $j=1$, $2$), we obtain points $\mathbf{r}^{q,j}_{l,m}
= \mathbf{r}_j^q(u^{q,j}_\ell,v^{q,j}_m)$ on the surface $\Gamma$. For every
$j=1,2$ and every $q=1,\dots ,Q_j$, the set of nodes
$\mathbf{r}^{q,j}_{l,m}$ for which the POU function $W_j^q$ associated with
the patch $\mathcal{P}_j^q$ does not vanish
\begin{equation}
\mathcal{N}^{q,j} = \left \{ \mathbf{r}^{q,j}_{l,m} \, :\,
W^q_j(\mathbf{r}^{q,j}_{l,m})>0\right \}
\end{equation}
defines the set of \emph{Nystr\"om nodes} on the the patch
$\mathcal{P}_j^q$. The overall set $\mathcal{N}$ of Nystr\"om nodes on the
surface $\Gamma$ mentioned in Section~\ref{nystrom}, is given by
\begin{equation}
\mathcal{N} = \bigcup_{j=1}^{2} \bigcup_{q=1}^{Q_j} \mathcal{N}^{q,j}.
\end{equation}

\begin{remark}\label{remark_classes}
For later reference we introduce the classes of functions
\begin{equation}\label{class1}
\mathcal{D}_1^q=\left \{\phi_1^q\in C^\infty\left(\mathcal{H}^q_1\right)\; : \;
\mathrm{supp}(\phi_1^q)\Subset\mathcal{H}^q_1 \right\}\; ,\; q=1,\dots,Q_1\;
,
\end{equation}
\begin{equation}\label{class2}
\mathcal{D}_2^q=\left \{\phi_2^q\in C^\infty\left(\mathcal{H}^q_2\right)\; :
\; \mathrm{supp}(\phi_2^q)\Subset\mathcal{H}^q_2 \right\}\; ,\;
q=1,\dots,Q_2\; ,
\end{equation}
where $C^\infty\left(\mathcal{H}^q_1\right)$ denotes the set of infinitely
differentiable functions defined on the open set $\mathcal{H}^q_1$,
$C^\infty\left(\mathcal{H}^q_2\right)$ denotes the set of functions defined
on the set $\mathcal{H}^q_2$ that are infinitely smooth on $\mathcal{H}^q_2$
up to and including the edge $\mathcal{H}^q_2\cap \{ v=0\}$, and where, for
sets $A$ and $B\subseteq \mathbb{R}^n$ the notation $A\Subset B$ indicates
that the closure of $A$ in $\mathbb{R}^n$ is a compact subset of
$\mathbb{R}^n$ that is contained in $B$.
\end{remark}

\subsection{Canonical decomposition and high-order quadrature rules\label{num_dec}}
The high-order numerical quadratures required by step~2. in
Section~\ref{nystrom} are obtained by applying the patch-wise
decomposition~\eqref{dec} to the weighted integral operators
$\mathbf{S}_\omega$ and $\mathbf{N}_\omega$ and reducing each one of them to
a sum of patch-integrals which, as shown in Sections~\ref{sec_S}
through~\ref{sec_combined}, can be classified into six distinct
\textit{canonical types}. For each of these canonical types we construct, in
Sections~\ref{sec_interior} and~\ref{sec_exterior}, spectrally convergent
quadrature rules on the basis of the Nystr\"om points
$\mathcal{N}^{q,j}$. Suggestions concerning selection of sizes of the edge
and interior patches, which are dictated on the basis of efficiency and
accuracy considerations, are put forth in Section~\ref{param_sel}.

\subsection{Computational implementation and efficiency\label{efficiency}}
The high-order methods presented in the following sections enable accurate
and fast computation of each one of the six canonical integral types
mentioned in Section~\ref{num_dec}. For added efficiency, however, our
solver exploits common elements that exist between the various canonical
integration algorithms, to avoid re-computation of quantities such as the
trigonometric functions associated with the Green's function, partition of
unity functions, integral weights, etc. Additional efficiency could be
gained by incorporating an acceleration method (see
e.g.~\cite{BrunoKunyansky} and references therein) and code parallelization.

\begin{figure}[h]
\center
\includegraphics[scale=0.4]{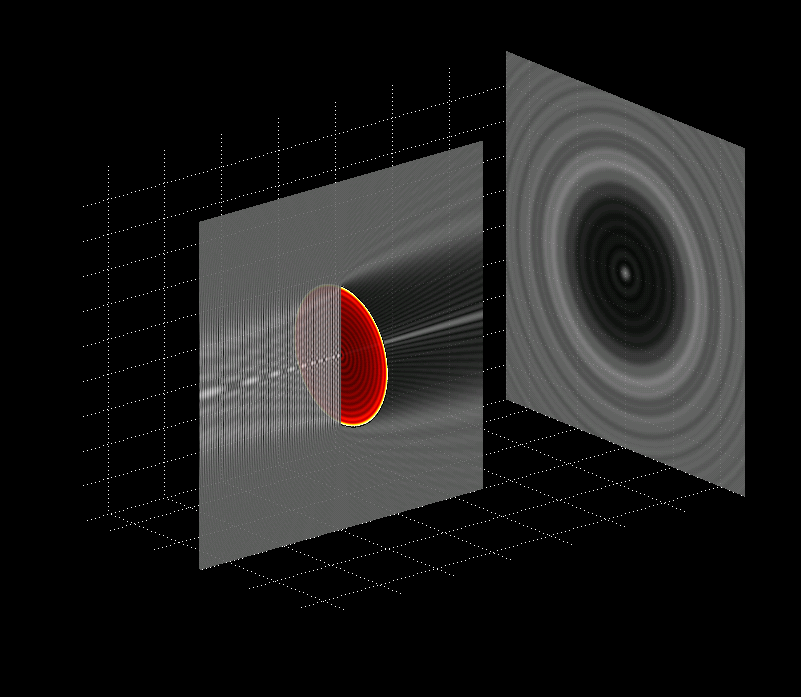}
\caption{Diffraction by an infinitely thin disc: solution to the Neumann
  problem for a disc of diameter $24\lambda$ under normal incidence. The
  famous Poisson spot is clearly visible at the center of the shadow area;
  see also Figure~\ref{poisson_fig}. The coloring on the disc represents
  the values of the surface unknown $\psi$.
  \label{disc_fig}} 
\end{figure}

\begin{figure}[h]
\center
\includegraphics[scale=0.4]{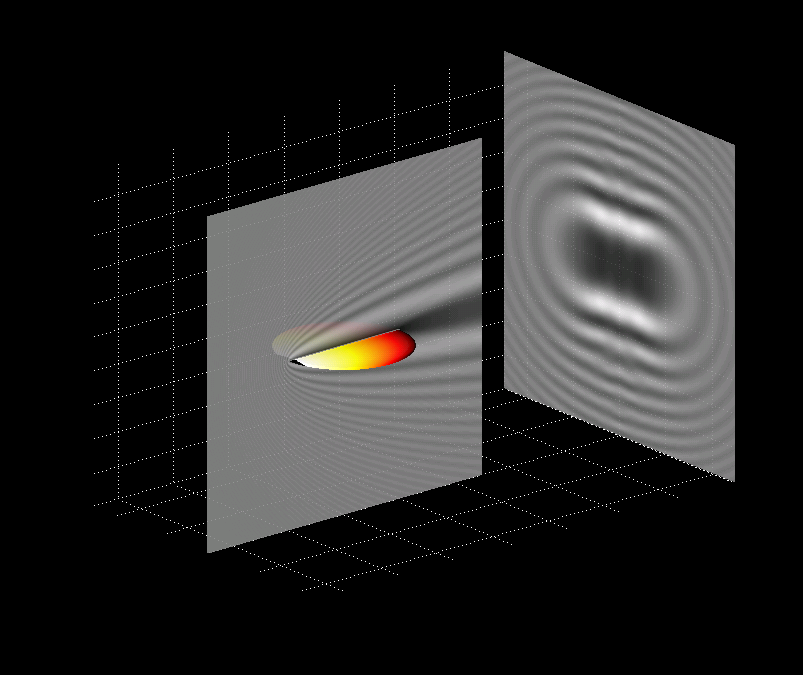}
\caption{Diffraction by an infinitely thin disc: solution to the Dirichlet
  problem for a disc of diameter $24\lambda$ under incidence parallel to the
  disc. The coloring on the disc represents the values of the surface
  unknown $\phi$.
  \label{disc_fig2}}
\end{figure}

\section{Canonical singular-integral decomposition of the operator $\mathbf{S}_\omega$\label{sec_S}}
In view of equation~\eqref{dec}, we express the weighted single-layer
operator $\mathbf{S}_\omega$,
\begin{equation}\label{single_expl}
\mathbf{S}_\omega[\varphi](\mathbf{r})=\int_\Gamma G_k(\mathbf{r},\mathbf{r}')\frac{\varphi(\mathbf{r}')}{\omega(\mathbf{r}')}dS',
\end{equation}
in the form
\begin{equation}\label{Spatch}
\mathbf{S}_\omega=  \sum \limits_{q=1}^{Q_1}  \mathcal{S}_1^q+\sum \limits_{q=1}^{Q_2}  \mathcal{S}_2^q,
\end{equation}
where
\begin{equation}\label{Sq}
\mathcal{S}^q_j[\varphi](\mathbf{r})=\int_{\mathcal{H}_j^q}
G_k(\mathbf{r}_j^q(u,v),\mathbf{r})\frac{\varphi\left(\mathbf{r}^q_j(u,v)\right)}{\omega\left(\mathbf{r}^q_j(u,v)\right)}
J^q_j(u,v)W^q_j(\mathbf{r}^q_j(u,v))dudv, \quad j=1,2.
\end{equation}
In Sections~\ref{int_patch} and~\ref{edge_patch} the integrals~\eqref{Sq}
are expressed in terms of canonical integrals of various types.
\subsection{Interior patch decomposition\label{int_patch}}
 For an interior patch $\mathcal{P}^q_1$ and for a point
 $\mathbf{r}\in\Gamma\setminus \mathcal{P}^q_1$, the integrand in~\eqref{Sq}
 is smooth and compactly supported within the domain of integration
 $\mathcal{H}^q_1$---since the weight $\omega(\mathbf{r})$ is smooth and
 nonzero away from the edge, and since the POU function $W^q_1$ vanishes
 outside $\mathcal{H}^q_1$---and, thus, the integral~\eqref{Sq} gives rise
 to our first canonical integral type:
 
\begin{equation}\label{Iqreg}
\begin{split}
&\mbox{Canonical Integral of Type I}\\
\mathcal{I}^{q,reg}_1[\phi_1]=\int_{\mathcal{H}_1^q}\phi_1&(u,v) du dv\quad ,
\quad \phi_1 \in \mathcal{D}_1^q\; ,
\end{split}
\end{equation}
see Remark~\ref{remark_classes}. For a point $\mathbf{r}\in\mathcal{P}^q_1$,
$\mathbf{r}=\mathbf{r}^q_1(u_0,v_0)$ for some $(u_0,v_0) \in
\mathcal{H}^q_1$, on the other hand, the integrand of~\eqref{Sq} with $j=1$
has an integrable singularity at the point $(u_0,v_0)$
(cf. equation~\eqref{Green}). Following~\cite{BrunoKunyansky} we express the
kernel as a sum of a localized singular part, and a smooth remainder,
$G_k=G_k^{sing} +G_k^{reg}$, where
\begin{equation}\label{Phi_sep}
G_k^{sing}=\eta_\mathbf{r}G_k^{re} ,\quad G_k^{reg}= (1-\eta_\mathbf{r})G_k^{re} + i G_k^{im} .
\end{equation}
Here, $G_k^\textit{re}(\mathbf{r},\mathbf{r}')=\frac{\cos( k
  |\mathbf{r}-\mathbf{r}' |)}{ |\mathbf{r}-\mathbf{r}' |}$ and
$G_k^{im}(\mathbf{r},\mathbf{r}')=\frac{\sin( k |\mathbf{r}-\mathbf{r}' |)}{
  |\mathbf{r}-\mathbf{r}' |}$ denote the real and imaginary parts of the
kernel $G_k(\mathbf{r},\mathbf{r}')$, respectively, and $\eta_\mathbf{r}$ is
a smooth function which vanishes outside a neighborhood of the point
$\mathbf{r}$. As in the previous reference, the collection of all pairs
$(\eta_\mathbf{r}, 1-\eta_\mathbf{r})$ for $\mathbf{r}\in\Gamma$ is called a
\textit{floating partition of unity}.  The integral that arises as $G_k$ is
replaced in~\eqref{Sq} by $G_k^{reg}$, has a smooth integrand which is
compactly supported within $\mathcal{H}_1^q$; clearly, this is an integral
of canonical type I. The integral obtained by substituting $G_k$ by
$G^{sing}_k$, on the other hand, gives rise to our second canonical type \\
\begin{equation}\label{Iqsing}
\begin{split}
&\mbox{Canonical Integral of Type II}\\
\mathcal{I}^{q,sing}_1[\phi_1](u_0,v_0)=\int_{\mathcal{H}_1^q}&\frac{\phi_1(u,v)}{|
  \mathbf{R}|} du dv,\quad \phi_1 \in \mathcal{D}_1^q
\end{split}
\end{equation}
where for the sake of conciseness, we have set $\mathbf{R}=\mathbf{r}^q_1(u_0,v_0)-\mathbf{r}_1^q(u,v)$ and where the point $(u_0,v_0)$ belongs to $\mathcal{H}^q_1$.

\subsection{Edge-patch decomposition~\label{edge_patch}}
The edge singularity on an edge patch $\mathcal{P}^q_2$ is characterized in
terms of the asymptotic form~\eqref{asymp}. In what follows we assume, as we
may, that on each edge patch,the weight $\omega$ is given by an expression
of the form
  \begin{equation}
\omega(\mathbf{r}^q_2(u,v))=\omega_2^q(u,v)\sqrt{v}, 
  \end{equation}
where the function $\omega_2^q(u,v)$ is \textit{smooth up to the edge and it
does not vanish anywhere along the edge}.  It follows that for an edge patch
$\mathcal{P}^q_2$ and for a point $\mathbf{r}\in\Gamma\setminus
\mathcal{P}^q_2$, the operator $\mathcal{S}_2^q$ defined in
equation~\eqref{Sq} for $j=2$ takes the form of an integral of our third
canonical type:

\begin{equation}\label{Iqreg_w}
\begin{split}
&\mbox{Canonical Integral of Type III}\\
\mathcal{I}^{q,reg}_2[\phi_2](u_0,v_0)=\int_{\mathcal{H}_2^q}&\phi_2(u,v)\frac{du
  dv}{\sqrt{v}}, \quad \phi_2 \in \mathcal{D}^q_2.
\end{split}
\end{equation}
Finally, for an edge patch $\mathcal{P}^q_2$ and for a point
$\mathbf{r}\in\mathcal{P}^q_2$ we once again use the floating partition of
unity to decompose the Green function as a sum of a singular and a regular
term. The regular term results in a canonical integral of Type~III, and the
singular term gives rise to our fourth canonical type:

\begin{equation}\label{Iqsing_w}
\begin{split}
&\mbox{Canonical Integral of Type IV}\\
\mathcal{I}^{q,sing}_2[\phi_2]=\int_{\mathcal{H}_2^q}&\frac{\phi_2(u,v)}{|\mathbf{R}|}
\frac{du dv}{\sqrt{v}}, \quad \phi_2 \in \mathcal{D}_2^q
\end{split}
\end{equation}
where $\mathbf{R}=\mathbf{r}^q_2(u_0,v_0)-\mathbf{r}^q_2(u,v)$.

 \section{Canonical decomposition of the operator $\mathbf{N}_\omega$\label{sec_N}}
In view of equations~\eqref{Ndef} and~\eqref{Somega} the operator
$\mathbf{N}_\omega$ is given by the point-wise limit
 \begin{equation}\label{Nomegadef}
 \mathbf{N}_\omega[\psi](\mathbf{r})\equiv \lim\limits_{z\rightarrow 0}
 \frac{\partial }{\partial \textbf{n}_{\mathbf{r}}}\int_\Gamma
 \frac{\partial
 G_k(\mathbf{r},\mathbf{r}'+z\textbf{n}_{\mathbf{r}'})}{\partial
 \textbf{n}_\mathbf{r}'}\psi(\mathbf{r}')
 \omega(\mathbf{r}')dS',\textbf{ }\quad \mathbf{r}\in \Gamma.
 \end{equation}
Following the open-arc derivation~\cite{LintnerBruno,Monch,ColtonKress1} we
obtain an adequate expression for this ``hypersingular operator'' by taking
advantage of the following lemma.

\begin{lemma}\label{Ndec}
The operator $\mathbf{N}_\omega$ can be expressed in the form
\begin{equation}\label{Ndec_eq}
\mathbf{N}_\omega=\mathbf{N}_\omega^g
+\mathbf{N}_\omega^{pv}\mathcal{T}_\omega
\end{equation}
where, denoting the surface gradient with respect to $\mathbf{r}'$ by
$\nabla^s_{\mathbf{r}'}$ and letting $[\,\cdot\, ,\cdot\,]$ denote the
vector product, the operators $\mathbf{N}_\omega^g$,
$\mathbf{N}_\omega^{pv}$ and $\mathcal{T}_\omega$ are given by
\begin{equation}\label{Ns}
\mathbf{N}_\omega^g[\psi](\mathbf{r})=k^2\int_\Gamma
G_k(\mathbf{r},\mathbf{r}')\psi(\mathbf{r}')\omega(\mathbf{r}')\textbf{n}_{\mathbf{r}'}.\textbf{n}_{\mathbf{r}}dS',\end{equation}
\begin{equation}\label{Tomega}
\mathcal{T}_\omega[\psi](\mathbf{r}')=\omega^2(\mathbf{r}')
\nabla^s_{\mathbf{r}'}[\psi](\mathbf{r}')+\frac{\psi(\mathbf{r}')}{2}\nabla^s_{\mathbf{r}'}
[\omega^2](\mathbf{r}'),\quad \mbox{and}
\end{equation}
\begin{equation}\label{Npv}
\mathbf{N}_\omega^{pv}[\mathbf{T}](\mathbf{r})=
p.v. \int_\Gamma\big[\nabla_{\mathbf{r}}G_k(\mathbf{r},\mathbf{r}'),\left[\textbf{n}_{\mathbf{r}'},\mathbf{T}(\mathbf{r}')\right]\big]\cdot\textbf{n}_{\mathbf{r}}\frac{dS'}{\omega(\mathbf{r}')}.
\end{equation}
\end{lemma}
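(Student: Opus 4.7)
The plan is to derive the identity by the route standardly used for the closed-surface hypersingular operator (the so-called Maue integration-by-parts formula, see e.g.~\cite{ColtonKress1,Monch} and the open-arc analogue in~\cite{LintnerBruno}), applied here to the density $\omega\psi$ which, by definition of $\mathbf{N}_\omega$, is what appears inside $\mathbf{N}$. Starting from~\eqref{Nomegadef}, I would commute the $z$-limit and the outer normal derivative with the integral and rewrite the double normal derivative $\partial_{\textbf{n}_{\mathbf{r}}}\partial_{\textbf{n}_{\mathbf{r}'}} G_k$ by means of the vector identity
\begin{equation*}
\partial_{\textbf{n}_{\mathbf{r}}}\partial_{\textbf{n}_{\mathbf{r}'}} G_k \;=\; k^2 G_k\,\textbf{n}_{\mathbf{r}'}\!\cdot\textbf{n}_{\mathbf{r}} \;-\; \bigl[\nabla_{\mathbf{r}} G_k,\,[\textbf{n}_{\mathbf{r}'},\nabla^s_{\mathbf{r}'}\cdot]\bigr]\cdot\textbf{n}_{\mathbf{r}},
\end{equation*}
which is valid away from the diagonal and is the pointwise precursor of Maue's formula (it follows from $\Delta G_k+k^2 G_k=0$ together with a decomposition of the $3$D gradient into normal and tangential components). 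After a formal integration by parts on $\Gamma$ transferring the tangential derivative off of $G_k$ and onto the density $\phi=\omega\psi$, one obtains
\begin{equation*}
\mathbf{N}[\phi](\mathbf{r}) \;=\; k^2\!\int_\Gamma G_k(\mathbf{r},\mathbf{r}')\,\phi(\mathbf{r}')\,\textbf{n}_{\mathbf{r}'}\!\cdot\textbf{n}_{\mathbf{r}}\,dS' \;+\; p.v.\!\int_\Gamma \bigl[\nabla_{\mathbf{r}} G_k,\,[\textbf{n}_{\mathbf{r}'},\nabla^s_{\mathbf{r}'}\phi(\mathbf{r}')]\bigr]\cdot\textbf{n}_{\mathbf{r}}\,dS',
\end{equation*}
the hypersingularity having been traded for a principal-value weakly singular kernel acting on the surface gradient of the density.

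Next I would apply the Leibniz rule $\nabla^s_{\mathbf{r}'}[\omega\psi]=\omega\nabla^s_{\mathbf{r}'}[\psi]+\psi\nabla^s_{\mathbf{r}'}[\omega]$ together with the elementary identity $\omega\nabla^s\omega=\tfrac{1}{2}\nabla^s[\omega^2]$ to obtain
\begin{equation*}
\omega(\mathbf{r}')\,\nabla^s_{\mathbf{r}'}[\omega\psi](\mathbf{r}') \;=\; \omega^2(\mathbf{r}')\,\nabla^s_{\mathbf{r}'}[\psi](\mathbf{r}')+\tfrac{\psi(\mathbf{r}')}{2}\nabla^s_{\mathbf{r}'}[\omega^2](\mathbf{r}') \;=\; \mathcal{T}_\omega[\psi](\mathbf{r}').
\end{equation*}
Substituting $\phi=\omega\psi$ in the two integrals above, the first term matches $\mathbf{N}_\omega^g[\psi]$ by inspection; in the second term, multiplying and dividing by $\omega(\mathbf{r}')$ absorbs one factor of $\omega$ into the vector density to form $\mathcal{T}_\omega[\psi]$ while the remaining $1/\omega(\mathbf{r}')$ joins $dS'$ to produce the measure $dS'/\omega(\mathbf{r}')$ of~\eqref{Npv}, yielding $\mathbf{N}_\omega^{pv}[\mathcal{T}_\omega[\psi]](\mathbf{r})$ and thereby~\eqref{Ndec_eq}.

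The only genuinely delicate step is the integration by parts on the open surface $\Gamma$, since on a closed surface the Maue formula is classical and boundary contributions from Stokes' theorem are absent. Here one must show that the edge terms generated along $\partial\Gamma$ vanish. This is precisely what the weighted formulation is designed to accomplish: by the regularity hypothesis $\psi$ is smooth up to and including the edge, while $\omega\sim d^{1/2}$ by~\eqref{asymp}, so the density $\omega\psi$ vanishes on $\partial\Gamma$ and the boundary integrals produced by the transfer of $\nabla^s_{\mathbf{r}'}$ onto the density drop out. Rigorously, I would first carry out the derivation with $\omega\psi$ replaced by $\chi_\varepsilon \omega\psi$ for a smooth cutoff $\chi_\varepsilon$ vanishing in an $\varepsilon$-neighborhood of $\partial\Gamma$ (where the closed-surface identity can be applied on a smooth closed extension of $\Gamma$), and then pass to the limit $\varepsilon\to 0$ using the $d^{1/2}$ decay of $\omega\psi$ and the local integrability of the resulting weakly singular kernels, exactly as in the two-dimensional open-arc argument of~\cite{LintnerBruno}. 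This limit argument, rather than the algebraic manipulation, is the principal obstacle.
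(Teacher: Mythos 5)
Your proposal is correct and follows essentially the same route as the paper: a Maue-type integration by parts (patterned on the Colton--Kress closed-surface derivation) that transfers the tangential derivative onto the weighted density $\omega\psi$, with the edge contributions vanishing because $\omega\sim d^{1/2}$, followed by the Leibniz identity $\omega\nabla^s_{\mathbf{r}'}[\omega\psi]=\mathcal{T}_\omega[\psi]$ and the absorption of $1/\omega$ into the measure of $\mathbf{N}^{pv}_\omega$. The only notable difference is procedural: the paper carries out the manipulation off the surface---writing $\mathbf{N}_\omega$ as the normal component of $\nabla\mathbf{D}_\omega$, invoking the curl--curl identity, and integrating by parts there---and passes to the limit $\mathbf{r}\to\Gamma$ only at the end, so the principal value emerges without the formal commutation of the hypersingular $z$-limit with the integral that your on-surface version glosses over (and which your cutoff argument near $\partial\Gamma$ would still need to accommodate).
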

\begin{proof}
See Appendix~\ref{Appendix_N}.
\end{proof}
As shown in Sections~\ref{Ns_dec_sec} through~\ref{DT}, we may evaluate
$\mathbf{N}_\omega$ by relying on Lemma~\ref{Ndec} and using quadratures for
various types of canonical integrals.

\subsection{Canonical decomposition of the operator $\mathbf{N}^g_\omega$ (equation~\eqref{Ns})\label{Ns_dec_sec}}
Calling $\psi_2=\psi \omega^2$ we re-express~\eqref{Ns} in the form
\begin{equation}\label{Ns_dec}
\mathbf{N}_\omega^g[\psi](\mathbf{r})=k^2\int_\Gamma G_k(\mathbf{r},\mathbf{r}')\frac{\left(\psi_2(\mathbf{r}')\textbf{n}_{\mathbf{r}'}.\textbf{n}_{\mathbf{r}}\right)}{\omega(\mathbf{r}')}dS'.
\end{equation}
Since $\omega^2(\mathbf{r})$ is a smooth function of $\mathbf{r}$ throughout
$\Gamma$, a construction similar to the one used for~\eqref{single_expl}
yields a decomposition of the operator $\mathbf{N}_\omega^s$ in terms of
canonical integrals of types I-IV; see Section~\ref{sec_S}.

\subsection{Canonical decomposition of the operator $\mathcal{T}_\omega$ (equation~\eqref{Tomega})}
Making use once again of the POU introduced in Section~\ref{sec_POU}, we
obtain the decomposition
$$\nabla^s_\mathbf{r}[\psi](\mathbf{r}) =\sum_{q=1}^{Q_1}
\nabla^s_\mathbf{r}[\psi W_1^q](\mathbf{r}) + \sum_{q=1}^{Q_2}
\nabla^s_{\mathbf{r}}[\psi W_2^q](\mathbf{r}),$$ of the surface gradient.
The evaluation of the $q$-th term in each one of these sums requires the
calculation of partial derivatives of the form
\begin{equation}\label{partial1}
\frac{\partial \phi_1(u,v)}{\partial u}\quad ,\quad
\frac{\partial\phi_1(u,v)}{\partial v}\; ,
\end{equation}
\begin{equation}\label{partial2}
\frac{\partial \phi_2(u,v)}{\partial u}\quad\mbox{and} \quad
\frac{\partial\phi_2(u,v)}{\partial v}
\end{equation}
for functions $\phi_1\in\mathcal{D}_1^q$ and
$\phi_2\in\mathcal{D}_2^q$. These partial derivatives can be evaluated
efficiently and with high-order accuracy by means of the differentiation
methods introduced in Sections~\ref{sec_interior} and~\ref{sec_exterior}
below.  In view of equation~\eqref{Tomega}, use of such high-order rules
enables high-order evaluation of the operator
$\mathcal{T}_\omega[\psi](\mathbf{r})$.

\subsection{Canonical decomposition of the operator $\mathbf{N}^{pv}_\omega$ (equation~\eqref{Npv})\label{DT}}
It is easy to check that, for a smooth field $\mathbf{T}$, $\mathbf{N}^{pv}_\omega[\mathbf{T}]$ can be evaluated
as a linear combination of functions of the form
$\mathbf{D}^{\mathbf{V}_\ell}_\omega[\phi_\ell]$, where $\mathbf{V}_\ell$ is a vector
quantity that varies with $\mathbf{r}$ but is independent of $\mathbf{r}'$,
where the operator $\mathbf{D}^{\mathbf{V}}_\omega$ is defined by
\begin{equation}\label{Domega}
\mathbf{D}^{\mathbf{V}}_\omega[\psi](\mathbf{r})=\mbox{p.v.} \int_\Gamma \left\{
\nabla_{\mathbf{r}}
G_k(\mathbf{r},\mathbf{r}')\cdot\mathbf{V}\right\}\frac{\psi(\mathbf{r}')}{\omega(\mathbf{r}')}dS',
\end{equation}
and where $\phi_\ell$ are smooth functions.  Applying the
decomposition~\eqref{dec} to the operator defined in equation~\eqref{Domega}
yields\begin{equation} \mathbf{D}^{\mathbf{V}}_\omega[\psi]=\sum\limits_{q=1}^{Q_1}
\mathbf{D}_1^{\mathbf{V},q}[\psi](\mathbf{r}) + \sum\limits_{q=1}^{Q_2}
\mathbf{D}_2^{\mathbf{V},q}[\psi](\mathbf{r}),
\end{equation}
 where 
 \begin{equation}\label{Djq}
 \mathbf{D}_j^{\mathbf{V},q}[\psi](\mathbf{r})=\mbox{p.v.}\int_{\mathcal{H}^q_j}\left\{
 \nabla_{\mathbf{r}}G_k(\mathbf{r},\mathbf{r}^q_j(u,v)) \cdot \mathbf{V} \right\} \frac{\psi(\mathbf{r}^q_j(u,v)) }{\omega(\mathbf{r}^q_j(u,v))}W_j^q(\mathbf{r}^q_j(u,v)) J^q_j(u,v)dudv.
 \end{equation}
We can
express the operator $\mathbf{D}_j^{\mathbf{V},q}$ as the sum of a hypersingular
operator and a weakly singular operator whose respective kernels are defined
by the split
\begin{equation}\label{grad_split}
\nabla_{\mathbf{r}}G_k(\mathbf{r},\mathbf{r}')\cdot\mathbf{V}=G_k^{pv} +
G_k^{ws} , \quad{}
G_k^{pv}=-\frac{(\mathbf{r}-\mathbf{r}')\cdot\mathbf{V}}{|\mathbf{r}-\mathbf{r}'|^3},
\end{equation}
where the residual kernel $G_k^\textit{ws}$ equals a sum of functions which are
either smooth or weakly singular with singularity $\frac{1}{|\mathbf{r}-\mathbf{r}'|}$. Using the partition-of-unity split embodied
in equation~\eqref{dec}, the operator with kernel $G_k^\textit{ws}$ can be
expressed in terms of integrals of canonical types I-IV.  The hypersingular
operator with kernel $G^{pv}_k$ on the other hand gives rise to our fifth
and sixth canonical types:
\begin{equation}\label{Ipv1}
\begin{split}
&\mbox{Canonical Integral of Type V}\\
\mathcal{I}_1^{q,pv}[\phi_1](u_0,v_0)=\mbox{p.v.}\int_{\mathcal{H}_1^q}&\frac{\mathbf{R}\cdot\mathbf{V}}{|\mathbf{R}|^3}\phi_1(u,v)dudv.
\end{split}
\end{equation}

\begin{equation}\label{Ipv2}
\begin{split}
&\mbox{Canonical Integral of Type VI}\\
\mathcal{I}_2^{q,pv}[\phi_2](u_0,v_0)=\mbox{p.v.}\int_{\mathcal{H}_2^q} 
&\frac{\mathbf{R}\cdot\mathbf{V}}{|\mathbf{R}|^3}\phi_2(u,v)\frac{dudv}{\sqrt{v}}.
\end{split}
\end{equation}

\section{Canonical decomposition of the composite operator $\mathbf{N}_\omega \mathbf{S}_\omega$}\label{sec_combined}
While the action of the composite operator $\mathbf{N}_\omega
\mathbf{S}_\omega$ on a function $\phi$ can be evaluated by producing first
$\psi = \mathbf{S}_\omega[\phi]$ and then evaluating
$\mathbf{N}_\omega[\psi]$, both of which can be obtained by the methods
described in the previous sections, we have found it advantageous in
practice to proceed differently, on the basis of the
expression~\eqref{Ndec_eq}; see Remark~\ref{Remark_sqrt} for more
details. Using the decomposition~\eqref{Ndec_eq}, we first evaluate the term
$\mathbf{N}^{g}_\omega \mathbf{S}_\omega[\phi]$ by means of a direct
composition: we compute $\mathbf{S}_\omega[\phi]$ and then apply
$\mathbf{N}^{g}_\omega$ to the result using the decompositions put forth in
Sections~\ref{sec_S} and~\ref{Ns_dec_sec} respectively.  To evaluate the
second term $\mathbf{N}^{pv}_\omega \mathbf{S}_\omega[\phi]$, on the other
hand, we first evaluate the quantity $\mathcal{T}_\omega
\mathbf{S}_\omega[\phi]$ by expressing the surface gradient of
$\mathbf{S}_\omega[\phi]$ required by equation~\eqref{Tomega} as
\begin{equation}\label{eq_grad_dec}
\nabla^s_\mathbf{r} S_\omega[\phi](\mathbf{r})=p.v.\int_\Gamma
\nabla^s_\mathbf{r}
G_k(\mathbf{r},\mathbf{r}')\frac{\phi(\mathbf{r}')}{\omega(\mathbf{r}')}dS'=\sum_{\ell=1,2}\mathbf{D}_\omega^{\tau_\ell(\mathbf{r})}[\phi](\mathbf{r}),
\end{equation}
where $\tau_1(\mathbf{r})$ and $\tau_2(\mathbf{r})$ denote two orthogonal
tangent vectors to $\Gamma$ at the point $\mathbf{r}$ which vary smoothly
with $\mathbf{r}$; the corresponding surface gradient of $\omega^2$ can be
obtained by direct differentiation of a closed form expression, if
available, or by means of the differentiation methods put forth in this
paper. The terms in the sum on the right-hand side of~\eqref{eq_grad_dec}
are of the form given in equation~\eqref{Domega} with
$\mathbf{V}=\tau_\ell(\mathbf{r})$ ($\ell =1,2$), and thus can be expressed
in terms of the canonical integrals of type I-VI, as outlined in
Section~\ref{DT}.

\begin{figure}[h]
\center
\includegraphics[scale=0.5]{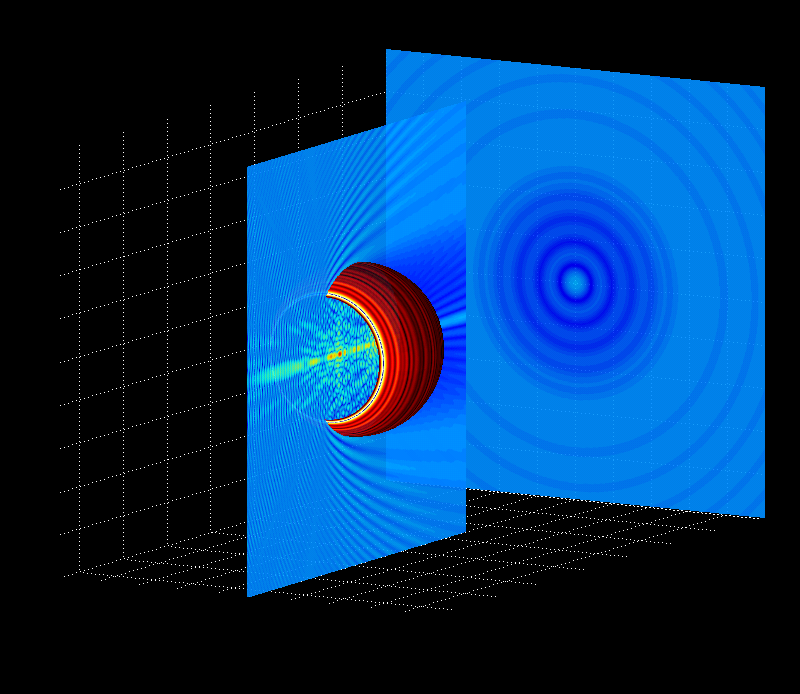}
\caption{Neumann problem on a spherical cavity of diameter $18\lambda$. The
coloring on the spherical wall represents the values of the surface unknown
$\psi$.\label{cavity_fig}}.
\end{figure}

\begin{figure}[h]
\center
\includegraphics[scale=0.5]{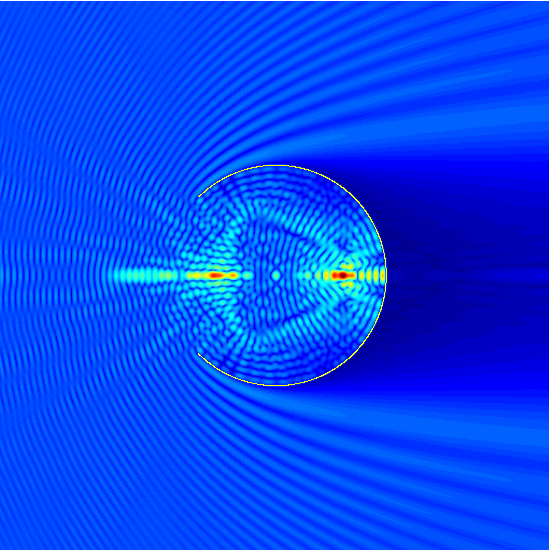}
\caption{Total Field inside a spherical cavity of diameter $18\lambda$,
Dirichlet Problem. \label{cavity_fig2}}.
\end{figure}

\section{High-order evaluation of interior-patch operators}\label{sec_interior}
In this section we describe our algorithms for evaluation of the
interior-patch operators introduced in the previous sections, namely the
integral operators of type I, II and~V and the differentiation
operator~\eqref{partial1}. To do this, and in accordance with
Section~\ref{sec_POU}, we assume that the nodes $(u^{q,1}_{\ell},v^{q,1}_{m})$
($u^{q,1}_\ell=u^{q,1}_0 + \ell h^{q,1}_u$, $v^{q,1}_m=v^{q,1}_0 + m
h^{q,1}_v$, $\ell=1,\dots L^1_{q}$, $m=1\dots M^1_{q}$), discretize a
rectangle that contains $\mathcal{H}^q_1$. 

\subsection{Type I Integral (Regular)}
We evaluate the canonical Type I integral defined in equation~\eqref{Iqreg}
by means of a simple trapezoidal sum over the grid: as noted
in~\cite{BrunoKunyansky} the periodicity ($\phi_1$ is compactly supported)
and smoothness of the integrand gives rise to super-algebraic convergence in
this case.

\begin{figure}[h]
\center
\includegraphics[scale=0.5]{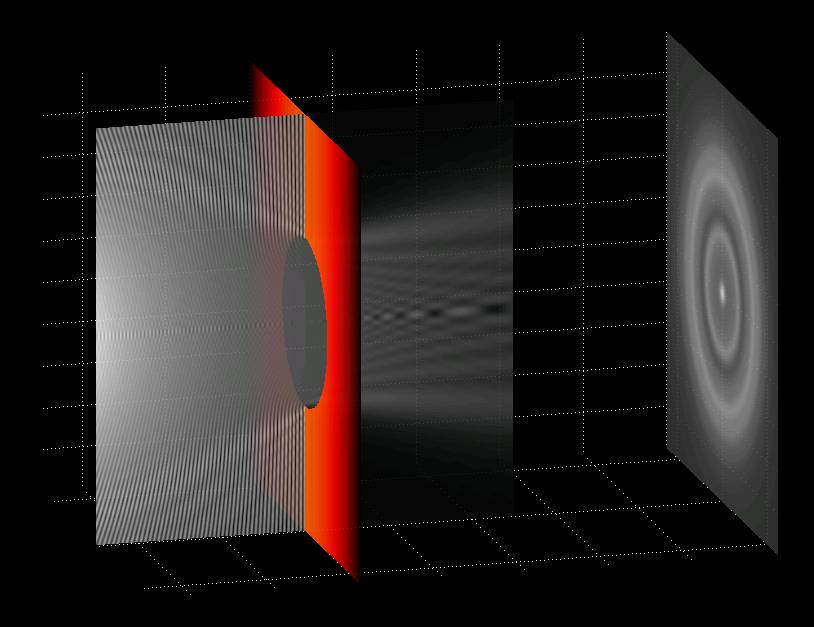}
\caption{Diffraction by a circular aperture: solution to the Neumann problem
  for an aperture of diameter $24\lambda$ under point source
  illumination. The source, which is not visible here, is located to the
  left of the displayed area. The coloring on the plane is introduced for
  visual quality, and it does not represent any physical quantity.
  \label{aperture_fig}}
\end{figure}

\begin{figure}[h]
\center
\includegraphics[scale=0.25]{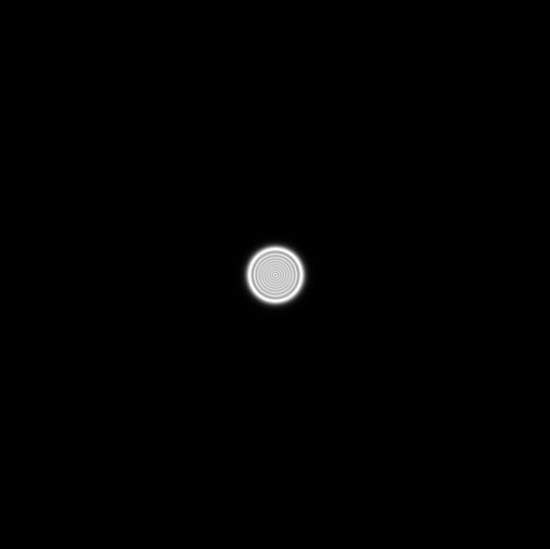}
\includegraphics[scale=0.25]{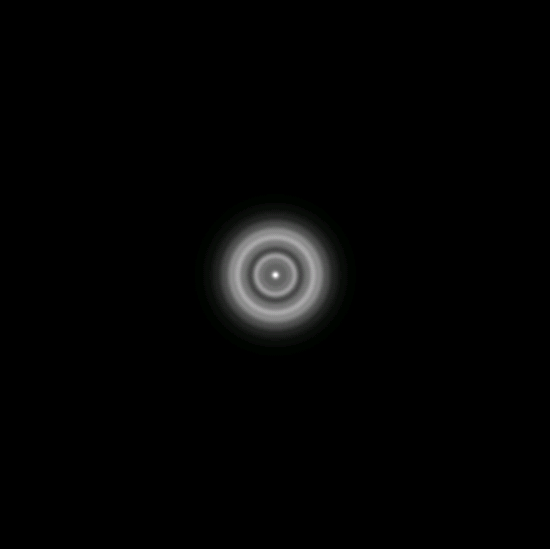}
\includegraphics[scale=0.25]{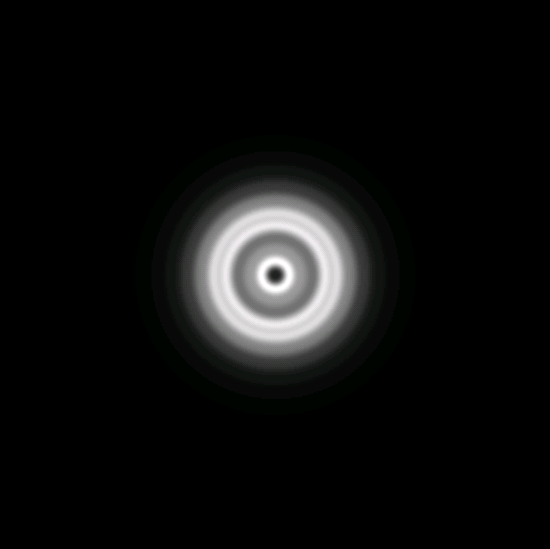}
\includegraphics[scale=0.25]{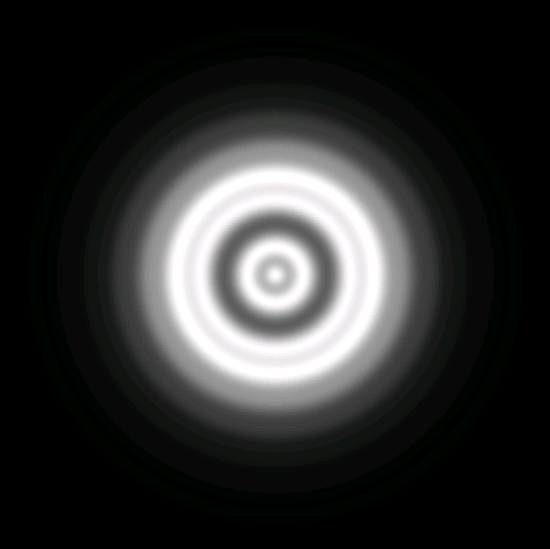}

\caption{Field diffracted by the circular aperture configuration depicted in
  Figure~\ref{aperture_fig}. From top left to bottom right, depiction of the
  diffracted field at observation screens located at distances of
  $6\lambda$, $60\lambda$, $120\lambda$ and $240\lambda$ behind the
  punctured plane. A dark-spot (the Poisson shadow) can be observed at the
  center of the illuminated area in the third-to-left
  image.\label{aperture_fringes}}.
\end{figure}

\subsection{Partial Derivatives}
In view of the smoothness and periodicity of the function $\phi_1$, a
standard two-dimensional FFT-based interpolation scheme based on the evenly
spaced grid values $\phi^{\ell,m}_1$ yields spectrally convergent
approximations of the function $\phi_1(u,v)$ and its derivatives; our
algorithm thus evaluates the derivatives required in
equation~\eqref{partial1} by performing a direct term by term
differentiation of the resulting Fourier representation.

\subsection{Type II  Integral (Singular)} \label{type_2_sec}
In order to resolve the singular integrand in equation~\eqref{Iqsing} we
utilize the polar change of variables introduced in~\cite{BrunoKunyansky}.
Defining $u(\rho,\theta)=u_0 +\rho \cos \theta$ and $v(\rho,\theta)=v_0 +
\rho\sin\theta$, we obtain
\begin{equation}\label{Itheta}
\mathcal{I}_1^{q,sing}[\phi_1](u_0,v_0)=\int_0^\pi
I^{q}_{\rho,1}[\phi_1](u_0,v_0,\theta)d\theta
\end{equation}
with
\begin{equation}\label{Iqrho}
I^q_{\rho,1}[\phi_1](u_0,v_0,\theta)=\int_{-\infty}^\infty \phi^\rho_1(\rho,\theta)
|\frac{|\rho| }{R} d\rho,
\end{equation}
where
\begin{equation}
R=|\mathbf{r}^q_1(u_0,v_0)-\mathbf{r}^q_1\left(u(\rho,\theta),v(\rho,\theta)\right) |,
\end{equation}
and where
\begin{equation}\label{phi1rho}
\phi^\rho_1(\rho,\theta)=\phi_1(u_0+\rho\cos\theta,v_0+\rho\sin\theta)
\end{equation} 
is a smooth function of $\rho$ and $\theta$ which vanishes for sufficiently
large values of $\rho$.  Since, as noted in~\cite{BrunoKunyansky}, the ratio
$\frac{|\rho|}{R}$ is a smooth function of $\rho$, the integral
$I^q_{\rho,1}[\phi_1](\theta, u_0,v_0)$ defined in~\eqref{Iqrho} can be computed
accurately via the trapezoidal rule with respect to $\rho$ for any value of
$\theta$. Similarly, applying the trapezoidal rule in the $\theta$ variable
gives rise to high-order convergence of the integral~\eqref{Itheta}, in view
of the $\pi$-periodicity of the integrand.

\begin{remark} Our application of the trapezoidal rule for  evaluation of
  $I_{\rho,1}^q(\theta,u_0,v_0)$ requires use of equidistant samples in the
  $\rho$ variable, which for most values of $\theta$, do not correspond to
  any of the original grid nodes $(u^{q,1}_\ell, v^{q,1}_m)$. To address
  this issue our solver relies on the FFT/cubic-spline interpolation
  technique presented in~\cite[Section 3]{BrunoKunyansky}, which allows for
  fast and efficient evaluation of the required equidistant $\rho$ samples.
\end{remark}

\begin{figure}[h]
\center
\includegraphics[scale=0.5]{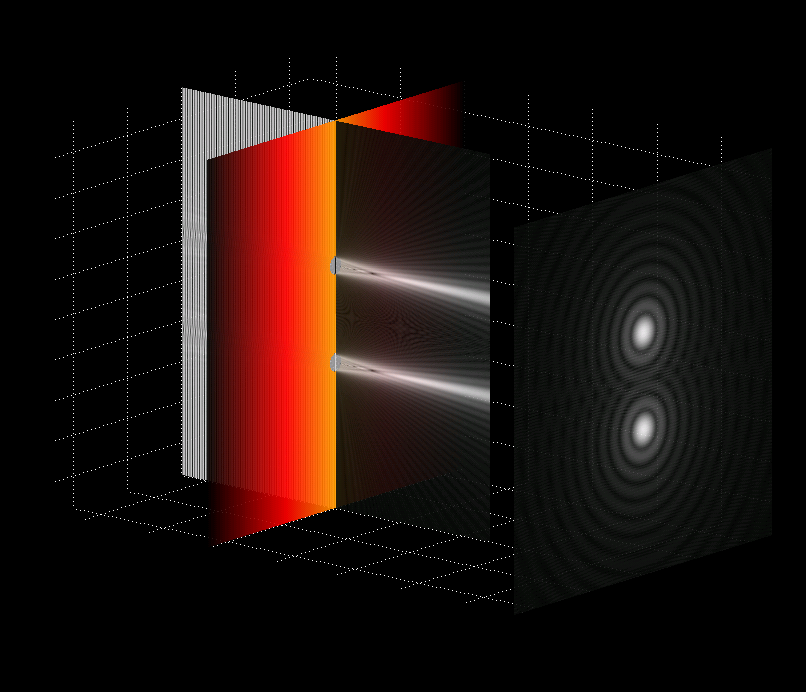}
\caption{Simulation of Young's experiment: diffraction by two circular
  apertures in a sound-hard plane (Neumann boundary conditions); the
  apertures are 24 wavelengths in diameter. The coloring on the plane, which
  is introduced for visual quality, does not represent any physical
  quantity. \label{Young_fig}}
\end{figure}

\subsection{Type V Integral (Principal Value)}\label{type_5_sec}
An application of the polar change of variables mentioned in
Section~\ref{type_2_sec} to the principal-value Type~V integral~\eqref{Ipv1}
results in the expression
 \begin{equation}\label{princ_value}
\mathcal{I}^{q,pv}_1[\phi_1](u_0,v_0)=\int_{0}^\pi I^{q,pv}_{\rho,1}[\phi_1](u_0,v_0,\theta)d\theta,
\end{equation}
where $I^{q,pv}_{\rho,1}[\phi_1](u_0,v_0,\theta)$ is given by the principal value integral
\begin{equation}\label{Ipv_rad}
I^{q,pv}_{\rho,1}[\phi_1](u_0,v_0,\theta)=p.v. \int_{-\infty}^\infty \frac{|\rho|\mathbf{R}\cdot\mathbf{V}}{R^3}\phi^\rho_1(\rho,\theta)d\rho.
\end{equation}
Here the function $\phi^\rho_1(\rho,\theta)$ is defined by
equation~\eqref{phi1rho} and we have set
$\mathbf{R}=\mathbf{r}^q_1(u_0,v_0)-\mathbf{r}^q_1\left(u(\rho,\theta) ,
v(\rho,\theta)\right)$. Equations~\eqref{princ_value} and~\eqref{Ipv_rad}
form the basis of our algorithm for evaluation of Type~V integrals.

Since both $\frac{|\rho|^3}{R^3}$ and
$\frac{\mathbf{R}\cdot\mathbf{V}}{\rho}$ are smooth functions of $\rho$ and
$\theta$, it is useful to consider the expression
\begin{equation}\label{I_pv_rho}
I^{q,pv}_{\rho,1}(u_0^q,v_0^q,\theta)=p.v. \int_{-\infty}^\infty \frac{|\rho|^3}{R^3}\frac{\mathbf{R}\cdot\mathbf{V}}{\rho}\frac{\phi^\rho_1(\rho,\theta)}{\rho}d\rho,
\end{equation}
for the integral~\eqref{Ipv_rad}. This is a 1-dimensional principal value
integral of the form
\begin{equation}\label{pv_1d}
I=\mbox{p.v.}\int_{-\infty}^\infty \frac{v(x)}{x}dx,
\end{equation}
where $v$ is a compactly supported smooth function.  Our algorithm proceeds
by evaluating this principal value integral by means of a trapezoidal rule
algorithm with integration nodes centered symmetrically around $x=0$---which,
as shown in~\cite{YingBirosZorin}, yields spectral accuracy for smooth and
periodic functions. In detail, letting $x_i=({i+\frac{1}{2}})/{M}$, after
appropriate scaling into the interval $[-1,1]$, our quadrature for the
integral~\eqref{pv_1d} is given by
\begin{equation}\label{pvcentral}
\mbox{p.v.}\int_{-1}^1 \frac{v(x)}{x}dx\sim \frac{1}{M}\sum_{i=-M}^{M-1}
\frac{v(x_i)}{x_i}.
\end{equation}
This expression provides spectrally accuracy as long as $v$ is a smooth
function of periodicity $2$. Our Type-V integration algorithm is completed
by trapezoidal integration in the $\theta$ variable to produce the
integral~\eqref{princ_value} with spectral accuracy.
\begin{remark} 
Application of the trapezoidal rule~\eqref{pvcentral} to compute the
integral~\eqref{I_pv_rho} requires evenly spaced samples in the $\rho$
variable, which, in addition, must also be \textit{symmetrically centered}
around $\rho=0$. To obtain such samples our algorithm proceeds in two steps:
1)~It uses the one-dimensional FFT/spline interpolation method presented
in~\cite[Section 3]{BrunoKunyansky} to produce evenly spaced samples of the
integrand in the $\rho$ variable, and 2)~It applies an FFT-based shift (see
Remark~\ref{shift}) to produce interpolated samples centered around
$\rho=0$. In view of the periodicity and smoothness of the function $v$,
this procedure is highly accurate, and it is, in fact significantly faster
and less memory intensive than the full two-dimensional spline-table
construction presented in~\cite{YingBirosZorin}---since it only requires
storage of one-dimensional tables.
\end{remark} 
\begin{remark}\label{shift}
Given point values $v(x_i)$ of a smooth and periodic function $v$ on an
equispaced grid $x_i=x_0 + i h$, samples of $v$ on a new shifted grid
$x_i^*=x_i + \delta$ can be obtained efficiently and with spectral accuracy
through use of FFTs. The algorithm proceeds as follows: 1)~Evaluation of the
FFT of the data set $v(x_i)$ to produce Fourier coefficients of $v(x)$,
2)~Multiplication of each Fourier coefficient by an appropriate exponential,
to produce the Fourier coefficients of the shifted function $v(x+\delta)$,
and 3)~Evaluation of the inverse FFT of the coefficients produced per point
2).
\end{remark} 

\begin{figure}[h]
\center
\includegraphics[scale=0.25]{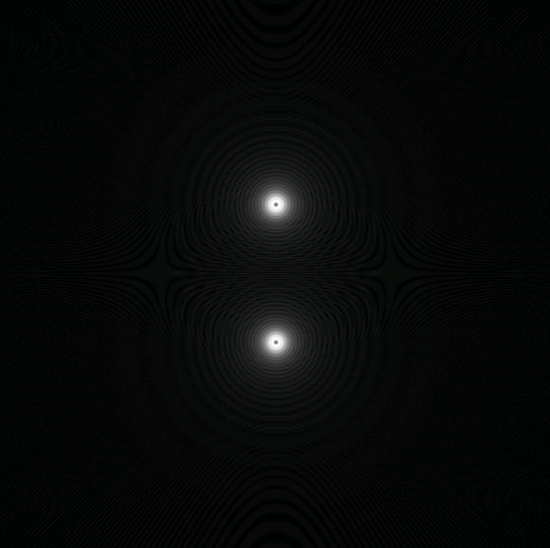}
\includegraphics[scale=0.25]{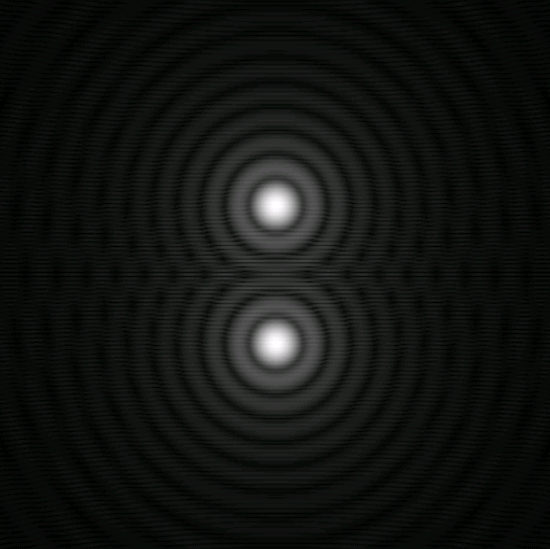}
\includegraphics[scale=0.25]{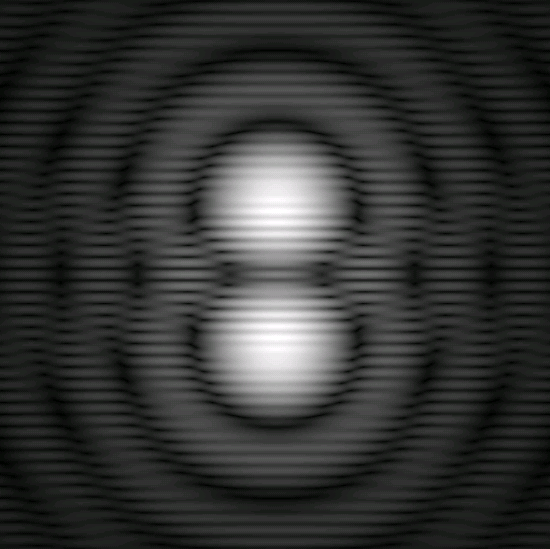}
\includegraphics[scale=0.25]{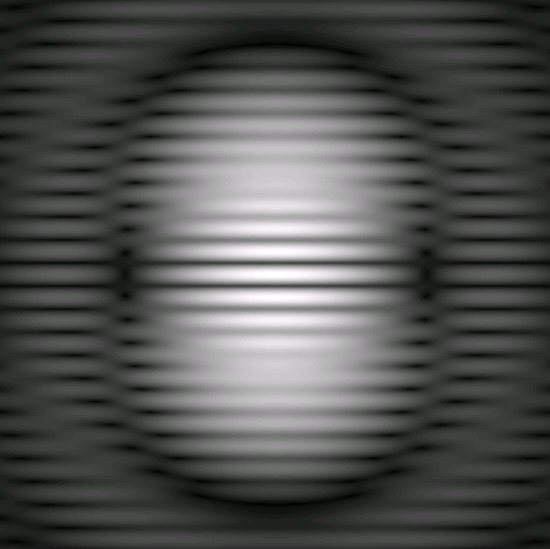}

\caption{Field diffracted by the two-hole configuration depicted in
  Figure~\ref{Young_fig}. From left to right, depiction of the diffracted
  field at observation screens located at distances of $72\lambda$,
  $576\lambda$, $1728\lambda$ and $3456\lambda$ behind the punctured
  plane. A dark-spot can be
  viewed again at the center of the illuminated circles in the left-most image by
  adequately enlarging the image.\label{Young_fringes}}.
\end{figure}

\begin{figure}
\center
\includegraphics[scale=0.4]{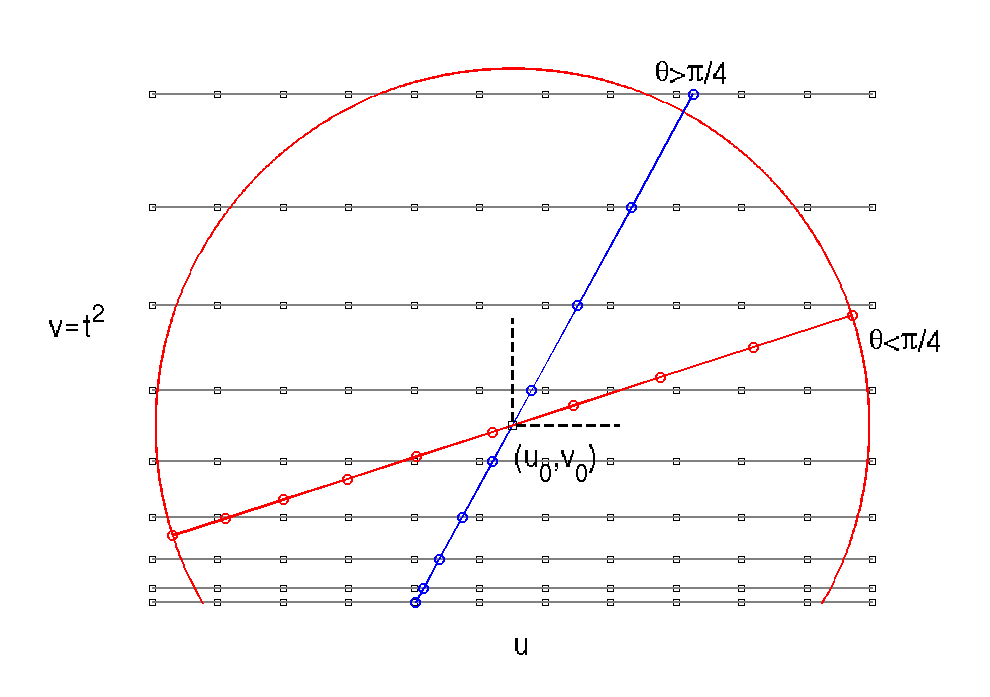}
\caption{Polar changes of variables around a point close to
the edge: quadratic sampling in the $v$ variable, requiring off-grid
interpolations for grazing angles. 
 }
  \label{polar_edge_fig}
\end{figure}

\section{High-order evaluation of edge-patch operators}\label{sec_exterior}
In this section we describe our algorithms for evaluation of the edge-patch
operators, namely the integral operators of type III, IV and~VI and the
differentiation operator~\eqref{partial2}. To do this, and in accordance
with Section~\ref{sec_POU}, we select a tensor product grid
$(u^{q,2}_{\ell},v^{q,2}_{m})$ quadratically refined in $v$, which, using
spatial mesh-sizes $h^{q,2}_u$ and $h^{q,2}_t$ in the $u$ and $t$ variables,
is given by

\begin{equation}
\left\{
\begin{array}{cc}
u^{q,2}_\ell =u^{q,2}_0+ \ell h^{q,2}_u,& \ell=1,\dots L^2_{q} \\ 
v^{q,2}_m= \left( (\frac{1}{2} + m )h^{q,2}_t\right)^2, &m=1\dots M^2_{q}.
\end{array} \right.
\end{equation}

This grid is assumed to discretize a rectangle that contains
$\mathcal{H}^q_2$; in view of the assumptions made on this set
(Section~\ref{sec_POU}) and the form of the discretization
$(u^{q,2}_{\ell},v^{q,2}_{m})$ we see that, while the edge $v=0$ is not
itself sampled by this discretization, a parallel line to it, at a distance
of $(h^{q,2}_t/2)^2$ in $(u,v)$ space, is.  

\subsection{Type III Integral (Regular)}
For any smooth function $g$ defined over the interval $[0,1]$ which vanishes
identically with all its derivatives at $x=1$, the function $g(t^2)$ can
clearly be extended as a smooth and periodic function of period 2. It
follows immediately from the identity
\begin{equation}
\int_0^1 \frac{g(x)}{\sqrt{x}}dx=\int_{-1}^1 g(t^2) dt =2\int_0^1 g(t^2)dt
\end{equation}
that the trapezoidal rule approximation
\begin{equation}\label{trap_edge}
\int_0^1\frac{g(x)}{\sqrt{x}}dx\sim \frac{2}{M}\sum\limits_{m=1}^M g\left( t_m^2\right),\quad t_m=\frac{2m+1}{2M},i=0,\dots,M-1
\end{equation}
gives rise to super-algebraic convergence. Since the patch discretization
$(u^{q,2}_\ell,v_m^{q,2})$ can be expressed in the form
$v_m^{q,2}=(t_m^{q,2})^2$, where $t_m^{q,2}=(\frac{1}{2}+m)h_t^{q,2}$, a
two-dimensional trapezoidal rule using this mesh in the set $\mathcal{H}_2^q
$ is super-algebraically convergent.

\subsection{Partial Derivatives}\label{sec_partial2}

In view of the smoothness and periodicity of the function $\phi_2(u,t^2)$, a
two-dimensional interpolation scheme based on use of FFTs along the $u$
variable and FCTs (Fast Cosine Transform) along the $t$ variable yields
spectrally convergent approximations of the function $\phi_2(u,t^2)$ and its
derivatives. Our algorithm thus evaluates the derivatives required in
equation~\eqref{partial2} by performing a direct term by term
differentiation of the resulting Fourier representations together with the
expression
\begin{equation}\label{denominator}
\frac{\partial \phi_2(u,t^2)}{\partial v}=\frac{1}{2t}\frac{\partial
}{\partial t}\left[\phi_2(u,t^2)\right].
\end{equation}

\begin{remark}\label{Remark_sqrt}
In view of the presence of the $t=\sqrt{v}$ denominator on the right-hand
side of equation~\eqref{denominator}, evaluation of partial derivatives of
the function $\phi_2(u,v)$ with respect to $v$ on the basis of a term by
term differentiation of cosine expansion of the function $\phi_2(u,t^2)$,
while yielding spectrally accurate results, is less accurate near the edge
than away from the edge---in close analogy with the well-known relative loss
of accuracy around end points in Chebyshev-based numerical
differentiation. This is why our algorithm was designed to evaluate the
composite operator $\mathbf{N}_\omega\mathbf{S}_\omega$ by first producing
the combination $\mathcal{T}_\omega\mathbf{S}_\omega$ via the rules derived
for $D_\omega^{\mathbf{T}}$ as explained in Section~\ref{sec_combined}, thus
avoiding numerical differentiation. Unfortunately, in the evaluation of the
operator $\mathbf{N}_\omega$ (which is necessary e.g. for solution of
equation equation~\eqref{NGood}), direct computation of derivatives and
associated accuracy loss does not seem to be avoidable.
\end{remark}

\subsection{Type IV Integral (Singular)}\label{type_4_sec}
As in Section~\ref{type_2_sec}, we utilize a polar change of variables to
resolve the Green's function singularity in the canonical type defined in
equation~\eqref{Iqsing_w}, thus obtaining the expression
\begin{equation}\label{edge_polar}
\mathcal{I}_2^{q,sing}[\phi_2](u_0,v_0)=\int_0^\pi 
\left(\int_{-\frac{v_0}{\sin\theta}}^\infty H(u_0,v_0,\rho,\theta)
\frac{d\rho}{\sqrt{v_0+\rho\sin\theta}}\right)d\theta,
\end{equation}
where the integrand
$H(u_0,v_0,\rho,\theta)=\phi_2(u_0+\rho\cos\theta,v_0+\rho\sin\theta)\frac{|\rho|}{R}$
is a smooth function of $\rho$ and $\theta$, which vanishes for $\rho$
larger than a certain constant $\rho_0$. While the square-root singularity
in the inner-integral can clearly be resolved to high-order by applying an
appropriate quadratic change of variable, the outer integrand in $\theta$ is
not a uniformly smooth function: as detailed in Appendix~\ref{appendix_BL},
it develops a boundary-layer as $v_0$ approaches $0$. The analysis presented
in Appendix~\ref{appendix_BL} suggests a simple and efficient method for
high-order resolution of this boundary layer---thus leading to accurate
evaluation of the integral~\eqref{edge_polar}. This methodology, which is an
integral part of our solver, is described in what follows.

The aforementioned boundary-layer integration method is based on use of the
change of variables $t=\sqrt{v_0+\rho\sin\theta}$. With this change of
variables equation~\eqref{edge_polar} becomes
\begin{equation}\label{edge_polar_nosqrt}
\mathcal{I}_2^{q,sing}[\phi_2](u_0,v_0)=\int_0^\pi I_{\rho,2}^q[\phi_2](u_0,v_0,\theta)d\theta,
\end{equation}
\begin{equation}\label{Iqrho2}
I_{\rho,2}^q[\phi_2](u_0,v_0,\theta)=\int_{0}^\infty H(u_0,v_0,\frac{t^2-v_0}{\sin\theta},\theta)dt.
\end{equation}
The integral~\eqref{Iqrho2} is evaluated with high-order accuracy by means
of a trapezoidal rule in the $t$ variable, for any $0<\theta<\pi$. In order
to capture the boundary-layer in the outer-integral
in~\eqref{edge_polar_nosqrt}, our algorithm relies on an additional changes
of variables $\theta = \alpha^2$ and  $\theta = \pi-\alpha^2$, which lead to
the expression 
\begin{equation}\label{Irho_edge_alpha} \int_0^\pi
I^q_{\rho,2}[\phi_2](u_0,v_0,\theta)d\theta =
\int_0^{\sqrt{\frac{\pi}{2}}}\left(
I^q_{\rho,2}[\phi_2](u_0,v_0,\alpha^2)-I^q_{\rho,2}[\phi_2](u_0,v_0,\pi-\alpha^2)
\right)\alpha d\alpha.
\end{equation}
In view of the analysis presented in Appendix~\ref{appendix_BL}, the
boundary layer is confined to the interval $[0,\alpha^*(v_0)]$, where
$\alpha^*(v_0)=(\frac{v_0}{d})^\frac{1}{3}$, and we therefore
decompose the $\alpha$-integral in the form
\begin{equation}\label{Irho_edge_alpha_split}
\int_0^{\sqrt{\frac{\pi}{2}}} \dots d\alpha = \int_0^{\alpha^*(v_0)} \dots d\alpha +
\int_{\alpha^*(v_0) }^{\sqrt{\frac{\pi}{2}}} \dots d\alpha.
\end{equation}
For a given error tolerance, our algorithm proceeds by applying Chebyshev
integration rules to both integrals in~\eqref{Irho_edge_alpha_split}, using
for the second integral a number of integration points that does not depend
on $v_0$, and using for the first integral a number of integration points
that grows slowly as $v_0$ tends to zero. In practice, we have found that a
mild logarithmic growth in the number of integration points suffices to give
consistently accurate results. In view of such slow required growth, and for
the sake of simplicity, the number of integration points used for evaluation
of the first integral in~\eqref{Irho_edge_alpha_split} was taken to be
independent of $v_0$ and sufficiently large to meet prescribed error
tolerances; we estimate that a minimal additional computing time results
from this practice in all of the examples considered in this paper.

\begin{remark}\label{polar_edge}
In order to apply the trapezoidal rule~\eqref{trap_edge} for evaluation the
integral of~\eqref{Iqrho2} we distinguish two cases, as illustrated in
Figure~\ref{polar_edge_fig}. For $\frac{\pi}{4}\leq \theta \leq
\frac{3\pi}{4}$ we use the sampling in $t$ provided by intersections with
the original grid underlying $\mathcal{P}^q_2$: the 1-dimensional
cubic-spline interpolation method introduced in
section~\ref{type_2_sec} can be used to efficiently interpolate
the function $H(\frac{t^2-v_0}{\sin\theta},\theta)$ at the needed
integration points. For $0\leq\theta\leq\frac{\pi}{4}$ and
$\frac{3\pi}{4}\leq \theta \leq\pi$, on the other hand, the $t$-sampling provided by the
intersections with the original grid is too coarse. In this case, we resort
to a full two-dimensional interpolation of the density $\phi_2$ (see
Remark~\ref{two_d_interp}) to interpolate to a mesh in the $t$ variable
which, away from $t=0$ has roughly the same sampling density as that in the
overall patch discretization. In practice a fixed number of discretization
points is used to discretize all of the $t$ integrals considered in the
present remark.
\end{remark}
\begin{remark}\label{two_d_interp}
The two-dimensional interpolation method for smooth functions $\phi_2(u,v)$,
which is mentioned in Remark~\ref{polar_edge}, proceeds by first performing
a two-dimensional Fourier expansion of the function $\phi_2(u,t^2)$, by
means of FFTs along the $u$ variable and FCTs along the $t=\sqrt{v}$
variable, followed zero-padding by a factor $P$ (in practice we use
$P=6$). This procedure results in a spectral approximation of $\phi_2$ (and,
by term-by-term differentiation, of its derivatives as well) on a highly
resolved two-dimensional grid. The final interpolation scheme is obtained by
building bi-cubic spline interpolations based on function values and
derivatives on each square of the refined
grid.~\cite[p. 195]{NumericalRecipes}.
\end{remark}

\begin{figure}[h]
\center
\includegraphics[scale=0.5]{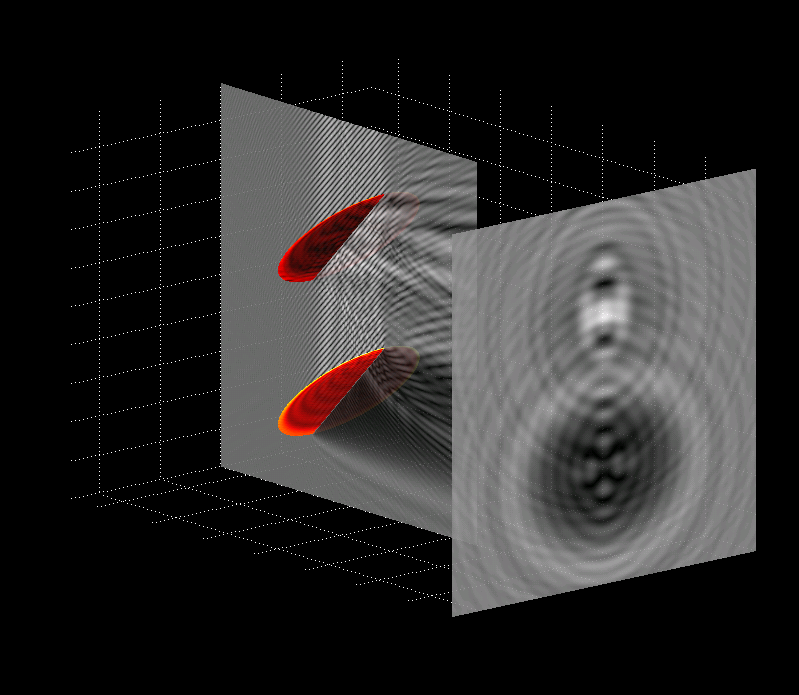}
\caption{Multiple scattering examples: Neumann problem on two parallel discs
  $24\lambda$ in diameter, illuminated at a $45$ degree angle. The coloring
  on the discs represents the values of the surface unknown $\psi$.
  \label{twoDiscs_fig}}
\end{figure}

\subsection{Type VI Integral (Principal Value)}\label{type_6_sec}
Our algorithm evaluates the principal-value edge-patch Type VI canonical
integral $\mathcal{I}^{q,pv}_2$ in a manner similar to that used for Type IV
treated in Section~\ref{type_4_sec}: introducing the local polar change of
variables around the point $\mathbf{r}$ we obtain
\begin{equation}\label{i2_new}
\mathcal{I}^{q,pv}_2[\phi_2](u_0,v_0)=\int_0^\pi
 \left(p.v. \int_\frac{-v_0}{\sin\theta}^\infty
 \frac{H^{T}(u_0,v_0,\rho,\theta)}{\rho}\frac{d\rho}{\sqrt{v_0+\rho\sin\theta}}
 \right) d\theta,
\end{equation}
where 
\begin{equation}
H^T(u_0,v_0,\rho,\theta)=\phi_2(u_0+\rho\cos\theta, v_0+\rho\sin\theta)\frac{|\rho|^3}{R^3}\frac{\mathbf{R}.\mathbf{T}}{\rho}
\end{equation}
is again, a smooth function of $\rho$ and $\theta$ which vanishes
identically for $\rho>\rho_0$.  Resorting to the quadratic change of
variables $t = \sqrt{v_0+\rho\sin\theta}$ we obtain
\begin{equation}\label{I_pv_fac}
\mathcal{I}^{q,pv}_2[\phi_2](u_0,v_0)=\int_0^\pi I^{q,pv}_{\rho,2}[\phi_2](u_0,v_0,\theta)d\theta.
\end{equation}
where the radial integral 
\begin{equation}\label{I_rho_pv_edge}
I^{q,pv}_{\rho,2}[\phi_2](u_0,v_0,\theta)=p.v. \int_0^\infty H^T(u_0,v_0,\frac{t^2-t_0^2}{\sin\theta},\theta)\frac{dt}{t^2-t_0^2}, \quad t_0=\sqrt{v_0}
\end{equation}
can be expressed in the form
\begin{equation}
I^{q,pv}_{\rho,2}=\mbox{p.v.}\int_{0}^\infty \frac{v(t^2)}{t^2-t_0^2}dt\quad , \quad
  \mbox{where $v(t)$ is smooth for $t\geq 0$ and vanishes for $t$ large enough}.
\end{equation}
Simple algebra then yields
\begin{equation}\label{pv_edge_rule}
I^{q,pv}_{\rho,2}=\mbox{p.v.}\int_{0}^\infty
v(t^2)\left\{\frac{1}{t-t_0}-\frac{1}{t+t_0}\right\}dt=\mbox{p.v.}\int_{-\infty}^\infty\frac{v(t^2)}{t-t_0}dt;
\end{equation}
clearly, the right hand side integral in equation~\eqref{pv_edge_rule} can
be evaluated with high-order accuracy by means of the trapezoidal
rule~\eqref{pvcentral}.

Using this algorithm for evaluation of the integral~\eqref{I_rho_pv_edge}
for any fixed value of $\theta$ our algorithm for evaluation of the $\theta$
integral~\eqref{I_pv_fac}, and thus~\eqref{i2_new}, is completed, as in
Section~\ref{type_4_sec}, by relying on (a)~The quadratic change of variable
$\theta=\alpha^2$ and (b)~The boundary-layer
split~\eqref{Irho_edge_alpha_split}.

\section{Parameter Selection}\label{param_sel}
A number of parameters are implicit in the algorithm laid out in
Section~\ref{numer_framework}, including parameters that relate to the
overall surface patching and discretization strategies described in
Section~\ref{sec_POU} as well as parameters that arise in the polar
integration rules introduced in
Sections~\ref{type_2_sec},~\ref{type_5_sec},~\ref{type_4_sec}
and~\ref{type_6_sec}. Clearly, the values of such parameters have an impact
on both, accuracy for a given discretization density, as well as computing
time for a given accuracy tolerance. A degree of experimentation is
necessary to produce an adequate selection of such parameters for a given
problem. Without entering a full description of the choices inherent in our
own implementations, in what follows we provide an indication of the
strategies we have used to select two types of parameters, namely, (a)~The
width of the edge patches (see Figure~\ref{discPOU}), and (b)~The number of
discretization points used in both the radial and angular directions for
each polar integration problem for which the corresponding floating
partition of unity does not vanish at the open edge, as illustrated in
Figure~\ref{polar_edge_fig}. Similar (but simpler) considerations apply to
other parameters, such as width of floating partitions of unity, extents of
overlap between patches, etc.

With respect to point (a)~above we note that, for scattering solutions to be
obtained with a fixed accuracy tolerance, the discretization densities must
be increased as frequencies are increased, and, thus, the width of the edge
patches can be decreased accordingly---in such a way that the number of
discretization points in the $v$ direction for each one of the edge patches
is kept constant. This strategy is crucial for efficiency, since the edge
patches require use of the two-dimensional interpolation method mentioned in
Remark~\ref{two_d_interp}, which is significantly more costly than the
corresponding one-dimensional interpolation method used in the interior
patches. Use of constant number of $v$-discretization points within
shrinking edge patches for increasing frequencies thus enables
fixed-accuracy evaluation of edge-patch integrals with an overall computing
cost that is not dominated by the edge-patch two-dimensional interpolation
procedure.

Concerning point (b)~above, in turn, as mentioned in
Remark~\ref{polar_edge}, we make use of a fixed number of equispaced
integration points in the scaled radial variable $t$ (see
equation~\eqref{edge_polar_nosqrt}) for all values of the angular variable
$\theta$. In practice, we select the number of $t$-integration points to
equal the maximum value $N_t$ of the numbers $N_u$ and $N_v$ of points in
the $u$-$v$ discretization mesh that are contained in the $\theta =0$ and
$\theta =\pi/2$ lines, respectively, and which lie within the support of the
corresponding floating POU.  In order to preserve the wavelength sampling in
the angular integral~\eqref{edge_polar_nosqrt}, finally, the two integrals
on the right-hand-side of equation~\eqref{Irho_edge_alpha_split} are
evaluated on the basis of the Clenshaw-Curtis quadrature
rule~\cite{NumericalRecipes} using an $\alpha$ discretization mesh
containing $\frac{\pi}{2} N_t$ points---since the length of half a
circumference equals $\frac{\pi}{2}$ times its diameter.

\section{Numerical Results}\label{sec_numerical}
In this section, we present results obtained by means of a C{}\verb!++!
implementation of the algorithm outlined in Section~\ref{nystrom},
incorporating the canonical operator decompositions introduced in
Sections~\ref{sec_S} through~\ref{sec_combined} for the operators
$\mathbf{S}_\omega$, $\mathbf{N}_\omega$ and
$\mathbf{N}_\omega\mathbf{S}_\omega$, together with the high-order
integration rules put forth in Sections~\ref{sec_interior}
and~\ref{sec_exterior} and the iterative linear algebra solver GMRES. Errors
reported were evaluated through comparisons with highly-resolved numerical
solutions. Computation times correspond to single-processor runs (on a
2.67GHz Intel core), \emph{without use of the acceleration methods or
parallelization.} As mentioned in Section~\ref{efficiency}, application of
the acceleration method~\cite{BrunoKunyansky} in the present context does
not present difficulties; such extension will be considered in forthcoming
work.

\begin{table}
\begin{center}
\begin{tabular}[c]{  c  c c  c  c }
\hline $N$ & Dir($\mathbf{S}_\omega$) & Dir($\mathbf{N}_\omega\mathbf{S}_\omega$)& Neu($\mathbf{N}_\omega$) & Neu($\mathbf{N}_\omega\mathbf{S}_\omega$) \\ 
\hline
$16\times16 + 2\times24\times16$ & $2.4\times 10^{-4}$ & $2.5\times10^{-4}$ & $5.0\times
10^{-4}$&$2.6\times 10^{-4}$ \\
$32\times32 + 2\times48\times32$ &$4.8\times 10^{-6}$ & $4.8\times 10^{-6}$ & $5.3\times 10^{-6}$  &
$5.2\times 10^{-6}$ \\
$64\times64 + 2\times98\times64$ &$4.7\times 10^{-8}$ & $9.7\times10^{-8}$ & $4.9\times 10^{-8}$ &
$5.1\times 10^{-8}$ \\
\hline
\end{tabular} 
\caption{Scattering by a disc of diameter $3\lambda$, similar to the
  corresponding $24\lambda$ simulation depicted in
  Figure~\ref{disc_fig}. Maximum errors in the acoustic field on the square
  projection plate shown in the figure. This table demonstrates spectral
  convergence for all the formulations considered: doubling the
  discretization density results in orders-of-magnitude decreases in the
  numerical error. (The notation $Q_1\times m_1\times n_1 + Q_2\times
  m_2\times n_2$ indicates that a number $Q_1$ of patches containing
  $m_1\times n_1$ discretization points together with a number $Q_2$ of
  patches containing $m_2\times n_2$ discretization points were used for the
  corresponding numerical solution.)
\label{spectral_table}}
\end{center}
\end{table}

\begin{table}[h!]
\begin{center}
\begin{tabular}[c] {  c  c | c  c  c | c  c c  }
\multicolumn{2}{c }{} & \multicolumn{3}{ c }{ Dir($\mathbf{S}_\omega$)} & \multicolumn{3}{ c }{Dir($\mathbf{N}_\omega\mathbf{S}_\omega$)}  \\
  \hline
 Disc Size & Unknowns  &  It. & Time & $\epsilon_r$&  It. & Time & $\epsilon_r$  \\
\cline{1-8}
\cline{1-8}
\hline
\hline
$3\lambda$ & $4096$ &$6$&  $58\mbox{s}$ & $1.0\times 10^{-4}$ &6 &$5\mbox{m}20\mbox{s}$ & $1.4\times 10^{-4}$ \\
\hline
$6\lambda$ & $10240$ &$9$&  $3\mbox{m}5\mbox{s}$ & $8.2\times 10^{-5}$ &6 &$11\mbox{m}14\mbox{s}$ & $5.4\times 10^{-5}$ \\
\hline
$12\lambda$ & $28672$ &$13$&  $15\mbox{m}21\mbox{s}$ & $1.2\times 10^{-4}$ &7 &$36\mbox{m}31\mbox{s}$ & $3.7\times 10^{-4}$ \\
\hline
$24\lambda$ & $90112$ &$18$&  $2\mbox{h}30\mbox{m}$ & $2.8\times 10^{-4}$ &7 &$3\mbox{h}41\mbox{m}$ & $4.1\times 10^{-4}$ \\
\multicolumn{8}{c }{}\\ 
\multicolumn{2}{c }{} & \multicolumn{3}{ c }{ Neu($\mathbf{N}_\omega$)} & \multicolumn{3}{ c }{Neu($\mathbf{N}_\omega\mathbf{S}_\omega$)}  \\
  \hline
 Disc Size & Unknowns  &  It. & Time & $\epsilon_r$&  It. & Time & $\epsilon_r$  \\
\cline{1-8}
\cline{1-8}
\hline
\hline
$3\lambda$ & $4096$ &$16$&  $9\mbox{m}21\mbox{s}$ & $1.3\times 10^{-4}$ &6 &$5\mbox{m}50\mbox{s}$ & $1.3\times 10^{-4}$ \\
\hline
$6\lambda$ & $10240$ &$28$&  $36\mbox{m}31\mbox{s}$ & $2.\times 10^{-4}$ &6 &$11\mbox{m}36\mbox{s}$ & $5.8\times 10^{-5}$ \\
\hline
$12\lambda$ & $28672$ &$49$&  $2\mbox{h}43\mbox{m}$ & $1.7\times 10^{-4}$ &7 &$37\mbox{m}04\mbox{s}$ & $5.2\times 10^{-4}$ \\
\hline
$24\lambda$ & $90112$ &$80$&  $21\mbox{h}10\mbox{m}$ & $1.9\times 10^{-4}$ &7 &$3\mbox{h}51\mbox{m}$ & $6.1\times 10^{-4}$ \\
\end{tabular}

 \caption
{Iteration numbers and computing times for the problem of scattering by a
disc at normal incidence. Top: Dirichlet problem. Bottom: Neumann
problem. In each case, use of the second-kind combined operator
$\mathbf{N}_\omega\mathbf{S}_\omega$ gives rise to significantly smaller
iteration numbers than the corresponding first kind formulation. In the case
of the Neumann problem, the reduction in iteration numbers results in
substantially improved computing times. Note: all reported computing times
correspond to \emph{non-accelerated single-processor runs}. Dramatic
reductions in computing times would result from use of the acceleration
method~\cite{BrunoKunyansky}---see e.g. the recent
contribution~\cite{BrunoEllingTurc} for the closed-surface
case.\label{Disc_Table} }
\end{center}
\end{table}

\subsection{Spectral convergence}
We demonstrate the spectral properties of our algorithm through an example
concerning a canonical geometry, namely, the unit disc
\begin{equation}
x^2+y^2\leq 1, \quad z=0.
\end{equation}
For this surface we utilize three coordinate patches (see
Figure~\ref{discPOU}), including a large central patch given by equations
$\left\{x(u,v)=u, \quad y(u,v)=v, \quad z(u,v)=0\right\} $, and two edge
patches parametrized by the equations $\left\{x(u,v)=(1-v)\cos u,\quad
y(u,v)=(1-v)\sin u,\quad z(u,v)=0\right\}$ for values of $u$ and $v$ in
adequately chosen intervals.  The two edge-patches overlap as illustrated in
Figure~\ref{discPOU}, and their width is defined by the range of the $v$
variable, which, in accordance with Section~\ref{param_sel}, is reduced as
the frequency increases.  With reference to equation~\eqref{asymp}, the
integral weight is set to $\omega=\sqrt{1-x^2-y^2}$.


\begin{figure}[h]
\center
\includegraphics[scale=0.3]{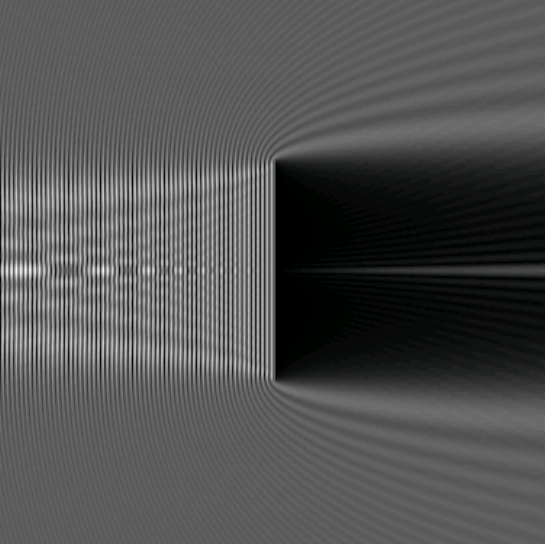}
\includegraphics[scale=0.3]{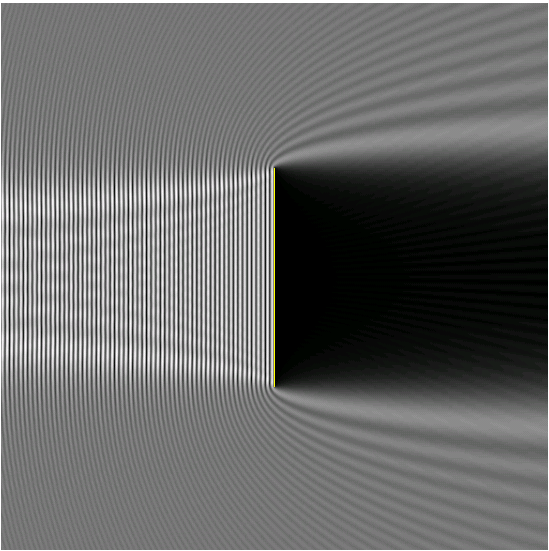}
\caption{Poisson-spot phenomenon. Left: cross-sectional view on a of the
diffraction pattern produced by a disc $24\lambda$ in diameter in three
dimensional space (Dirichlet problem, normal incidence). Right: Diffraction
by an arc of length $24\lambda$ in two dimensional space (Dirichlet problem,
normal incidence). Note that only in the three-dimensional case does a
``Poisson cone'' and corresponding ``Poisson spot'' develop in the shadow
region.\label{poisson_fig}}
\end{figure}

The sound-soft (Dirichlet) problem can be solved by means of either the
first-kind equation~\eqref{SGood}, or the second-kind
equation~\eqref{NSGoodD} which, in what follows, are called
Dir($\mathbf{S}_\omega$) and Dir($\mathbf{N}_\omega\mathbf{S}_\omega$),
respectively. The sound-hard (Neumann) problem, similarly, can be tackled by
means of either the first-kind equation~\eqref{NGood} or the second-kind
equation~\eqref{NSGoodN}; we call these equations Neu($\mathbf{N}_\omega$)
and Neu($\mathbf{N}_\omega \mathbf{N}_\omega$),
respectively. Table~\ref{spectral_table} demonstrates the high-order
convergence of the solutions produced by our implementations for each one of
these equations on a disc of diameter $3\lambda$; clearly errors decrease by
orders of magnitude as a result of a mere doubling of the discretization
density.

 \begin{table}[h!]
\begin{center}
\begin{tabular}[c] {  c  c | c  c  c | c  c c  }
\multicolumn{2}{c }{} & \multicolumn{3}{ c }{ Dir$(\mathbf{S}_\omega)$} & \multicolumn{3}{ c }{Dir$(\mathbf{N}_\omega\mathbf{S}_\omega)$}  \\
  \hline
 Spherical Cavity Size & Unknowns  &  It. & Time & $\epsilon_r$&  It. & Time & $\epsilon_r$  \\
\cline{1-8}
\cline{1-8}
\hline
\hline
$3\lambda$ & $9344$ &$17$&  $7\mbox{m}16\mbox{s}$ & $1.8\times 10^{-4}$ &13 &$1\mbox{h}18\mbox{m}$ & $4.5\times 10^{-4}$ \\
\hline
$9\lambda$ & $84096$ &$39$&  $4\mbox{h}06\mbox{m}$ & $2.1\times 10^{-4}$ &24 &$13\mbox{h}20\mbox{m}$ & $2.9\times 10^{-4}$ \\
\hline
$18\lambda$ & $336384$ &$65$&  $57\mbox{h}48\mbox{m}$ & $4.0\times 10^{-4}$ &43 &$124\mbox{h}$ & $1.4\times 10^{-4}$ \\
\hline
\multicolumn{8}{c }{}\\ 
\multicolumn{2}{c }{} & \multicolumn{3}{ c }{ Neu$(\mathbf{N}_\omega)$} & \multicolumn{3}{ c }{Neu$(\mathbf{N}_\omega\mathbf{S}_\omega)$}  \\
  \hline
 Spherical Cavity Size & Unknowns  &  It. & Time & $\epsilon_r$&  It. & Time & $\epsilon_r$  \\
\cline{1-8}
\cline{1-8}
\hline
\hline
$3\lambda$ & $9344$ &$57$& 1\mbox{h}24\mbox{m} & $1.2\times 10^{-4}$&13 &$1\mbox{h}20\mbox{m}$ & $8.8\times 10^{-4}$ \\
\hline
$9\lambda$ & $84096$ &$243$& 52\mbox{h}14\mbox{m} & $2.\times 10^{-4}$ &24 &$13\mbox{h}21\mbox{m}$ & $5.6\times 10^{-4}$ \\
\hline
$18\lambda$ & $336384$ &$>600$& - &- &43 &$124\mbox{h}$ & $3.1\times 10^{-4}$ \\
\hline
\end{tabular}

 \caption{ Iteration numbers and computing times for the problem of
scattering by the spherical cavity defined by equation~\eqref{cavity_param}
and depicted in Figures~\ref{cavity_fig} and~\ref{cavity_fig2}. Top:
Dirichlet problem. Bottom: Neumann problem. Reductions in numbers of
iterations and computing times occur as detailed in the caption of
Table~\ref{Disc_Table}, but, owing to the rich multiple scattering phenomena
that arise within the cavity, the iteration numbers are significantly
higher, in all cavity cases, than those required for the corresponding disc
problems. \label{Cavity_Table}}
\end{center}
\end{table}

\subsection{Solver performance under various integral formulations} 
In this section we demonstrate the performance of the open-surface solvers
based on use of the operators Dir($\mathbf{S}_\omega$) and
Dir($\mathbf{N}_\omega\mathbf{S}_\omega$) for the Dirichlet problem, as well
as the operators Neu($\mathbf{N}_\omega$) and Neu($\mathbf{N}_\omega
\mathbf{S}_\omega$) for the Neumann problem.  We base our demonstrations on
two open surfaces: a disc and a spherical cavity defined by
\begin{equation}\label{cavity_param}
x^2 + y^2 +z^2 =1, \quad z>\cos( \theta_0),
\end{equation}
where $\theta_0$ denotes the cavity aperture. For the examples discussed
here we set $\theta_0=\frac{3\pi}{4}$, and we made use of the weight
function $\omega=\sqrt{z-z_0}$ where $z_0 =\cos( \theta_0)$.

For both geometries, computational times and accuracies at increasingly
large frequencies are reported in Tables~\ref{Disc_Table}
and~\ref{Cavity_Table}. In all tables the acronym It. denotes the number of
iterations required to achieve a relative error (in a screen placed at some
distance from the diffracting surface) equal to ``$\epsilon_r$'' (relative
the the maximum field value on the screen), and ``Time'' denotes the total
time required by the solver to evaluate the solution.  As can be seen from
these tables, the equation Neu($\mathbf{N}_\omega$) requires very large
number of iterations for the higher frequencies. The computing times
required by the low-iteration equation
Neu($\mathbf{N}_\omega\mathbf{S}_\omega$) are thus significantly lower than
those required by Neu($\mathbf{N}_\omega$). The situation is reversed for
the Dirichlet problem: although the equation
Dir($\mathbf{N}_\omega\mathbf{S}_\omega$) requires fewer iterations than
Dir($\mathbf{S}_\omega$), the total computational cost of the low-iteration
equation is significantly higher in this case---since the application of the
operator in Dir($\mathbf{S}_\omega$), which, fortunately, suffices for the
solution of the Dirichlet problem, is substantially less expensive than the
application of the operator in Dir($\mathbf{N}_\omega\mathbf{S}_\omega$). As
it happens, at high-frequency, the bulk of the computational time used by
our solver is spent on interior-patch work: application of the acceleration
method of~\cite{BrunoKunyansky} (see also~\cite{BrunoEllingTurc}) would
therefore reduce dramatically overall computing times for high-frequency
problems.

Figures~\ref{disc_fig} and~\ref{disc_fig2} display three dimensional
renderings of patterns of diffraction by the disc under normal incidence
with Neumann boundary condition, and under horizontal incidence with
Dirichlet boundary condition. Corresponding images for the spherical-cavity
problem are presented in Figures~\ref{cavity_fig} and~\ref{cavity_fig2};
note the interesting patterns of multiple-scattering and caustics that arise
in the cavity interior.

\begin{figure}[h]
\center
\includegraphics[scale=0.5]{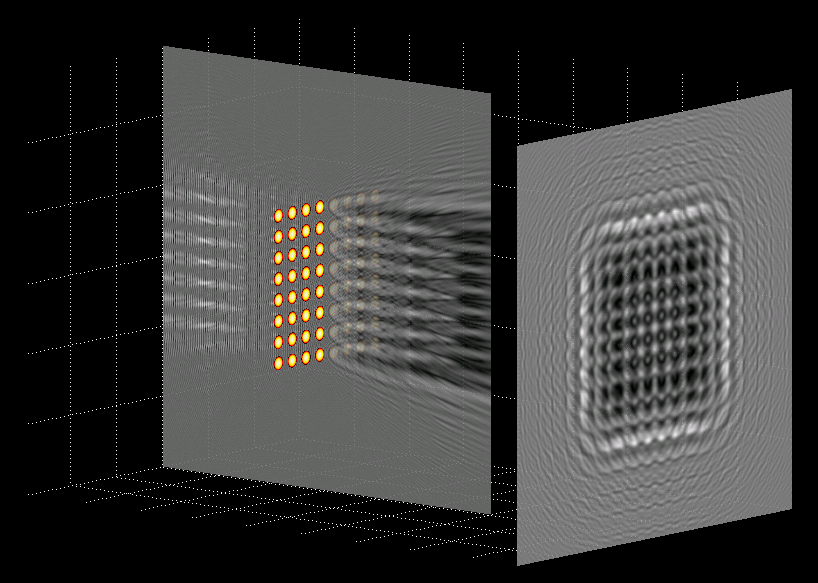}
\caption{Dirichlet problem on an array of $8\times8$ discs of diameter
  $6\lambda$. (Overall diameter: 96.6$\lambda$; 192 patches used.) The coloring on the discs represent the values of the surface unknown $\varphi$.
  \label{array_fig}}
\end{figure}
\subsection{Miscellaneous examples}
This section presents a variety of results produced by the open-surface
solver introduced in this paper, including demonstration of well known
effects such as the Poisson spot, and applications in classical contexts
such as that provided by the Young experiment.
\subsubsection{Poisson spot}
As mentioned in the Introduction, the experimental observation of a bright
area in the shadow of the disc, the famous Poisson spot, provided one of the
earliest confirmations of the wave-theory models of light.  The Poisson spot
is clearly visible in the diffraction patterns presented in
Figure~\ref{disc_fig} and the left portion of Figure~\ref{poisson_fig}. The
left portion of Figure~\ref{poisson_fig} displays a slice of the total field
around the disc along the $x-z$ plane, which gives a better view of the
Poisson-spot phenomenon: the ``Poisson cone'' is clearly visible in this
figure. Interestingly, this phenomenon does not occur in the two-dimensional
case. This is demonstrated in the image presented on the right portion of
Figure~\ref{poisson_fig}: the two-dimensional diffraction pattern arising
from the flat unit strip (which was obtained by the solver presented
in~\cite{BrunoLintner2}) gives rise to a dark dark shadow area which does
not contain a diffraction spot.
\begin{figure}[h]
\center
\includegraphics[scale=0.5]{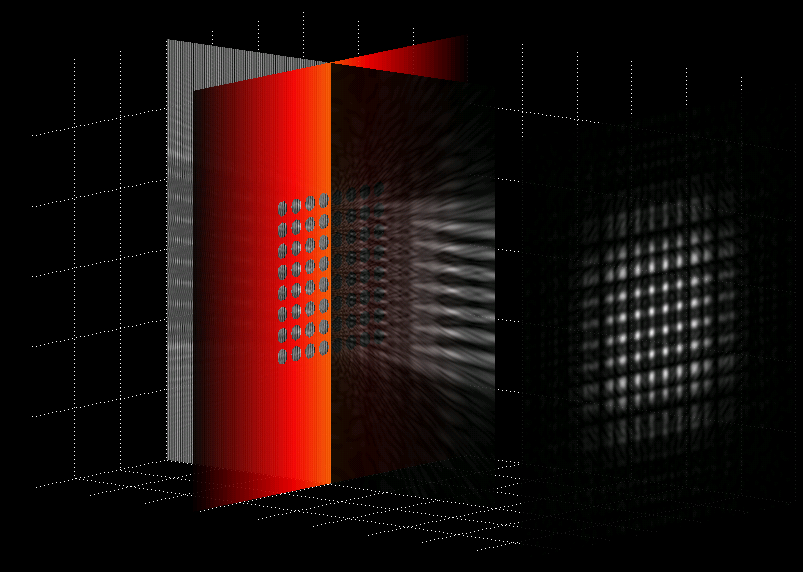}
\caption{Neumann problem for an array of $8\times8$ circular apertures of
diameter $6\lambda$. The diffracted field depicted in this figure was
produced by means of Babinet's principle from the diffraction pattern
displayed in Figure~\ref{array_fig}. As in Figure~\ref{aperture_fig}, the coloring on the plane is introduced for  visual quality, and does not represent any physical quantity.
 \label{array_Babinet}}
\end{figure}

\subsection{Babinet's principle, apertures and Young's experiment\label{apertures}}
For a flat open surface $\Gamma$ contained in a plane $\Pi$ one may consider
the corresponding problem of diffraction by the complement $\Gamma^c = \Pi
\setminus \Gamma$ of $\Gamma$ within $\Pi$. As is well known, the
diffraction pattern resulting from $\Gamma^c$ can be computed easily, by
means of the Babinet principle (see Appendix~\ref{appendix_babinet} and, in
particular, equation~\eqref{babinet_form}), from a corresponding diffraction
pattern associated with the surface $\Gamma$. (For ease of reference, a
derivation of the Babinet principle for scalar waves is presented in
Appendix~\ref{appendix_babinet}.) In what follows we present three
applications of the Babinet principle, namely, the diffraction by a circular aperture, the Young phenomenon, and diffraction across an array of apertures (in
Section~\ref{arrays}).

As our first application of Babinet's principle, in
Figure~\ref{aperture_fig} we present the field diffracted by a circular
aperture which is $24\lambda$ in diameter. The incident field for this image
was taken to be a point source located at the point $(0,0,-10)$, outside the
region displayed on the figure. In Figure~\ref{aperture_fringes}, we display
the total field on screens located behind the aperture at varying distance
from the punctured plane. Interestingly, under some configurations a dark
spot appears in the center of the bright area, in full accordance with
Arago's prediction that a 'Poisson shadow' must exist.  As our
second application of Babinet's principle, in Figure~\ref{Young_fig} we
present the field diffracted by a pair of nearby circular holes in an
otherwise perfectly sound-hard plane, under normal plane-wave incidence:
this is a setup of the classical Young experiment. This diffraction pattern
was produced, by means of Babinet's principle, from a corresponding solution
of a Dirichlet problem for two coplanar discs in space. As in Young's
experiment, interference fringes arise: these can be seen clearly on the
right-most image in Figure~\ref{Young_fringes}. Again, sharp dark spots at
the center of the circular illuminated areas can be seen in the leftmost
image in Figure~\ref{Young_fringes}.


\subsubsection{Arrays of scatterers/apertures\label{arrays}}
Geometries consisting of a number of disjoint open-surface scattering bodies
can be treated easily by the solver introduced in this paper---since the
decomposition in patches inherent in equation~\eqref{patch_union} is not
restricted to sets of patches representing a connected surface. The solution
for the two-disc diffraction problem presented in the previous section, for
example, was obtained in this manner. In what follows we provide a few
additional test cases involving composites of open surfaces.

Figure~\ref{twoDiscs_fig} presents the solution of a problem of scattering
by two parallel discs illuminated at an angle of $\frac{\pi}{4}$, with
Neumann boundary conditions. The beam reflected by the bottom disc, which
can clearly be traced onto the upper disc, gives rise to a bright area in
the projection screen behind that disc. Two final examples concern an array
of 64 discs and the corresponding array of 64 circular apertures on a
plane---where each disc is $6\lambda$ in diameter and the discs are
separated by $3\lambda$ spacings, for a total array diameter of
$96.6\lambda$. The corresponding diffracted fields are presented in
Figures~\ref{array_fig} and~\ref{array_Babinet}. The solution of the
Dirichlet problem for the 64 disc array was obtained in $23$ GMRES
iterations on a $192$ patch geometry representation.

\section{Conclusions}
We have introduced a new set of integral equations and associated high-order
numerical algorithms for the solution of scalar problems of diffraction by
open surfaces. The new open-surface solvers are the first ones in the
literature that produce high-order solutions in reduced number of GMRES
iterations for general (smooth) open surfaces and arbitrary frequencies.
The second-kind formulation Neu($\mathbf{N}_\omega \mathbf{S}_\omega$) is
highly beneficial in the context of the Neumann problem, as it requires
computing times that are orders-of-magnitude shorter than those required by
the alternative hypersingular formulation N($\mathbf{N}_\omega$). Such gains
do not occur for the Dirichlet problem: the proposed solver produces
high-order solutions to Dir($\mathbf{S}_\omega$) in very short computational
times, and the gains in iteration numbers that result from use of the
formulation Dir($\mathbf{N}_\omega \mathbf{S}_\omega$) do not suffice to
compensate for the significantly higher cost required for evaluation of the
operator $\mathbf{N}_\omega$.  With appropriate parallelization and
acceleration, fast and accurate solutions should be achievable by these
algorithms for very large structures, thus providing a robust numerical
workbench for the solution of classical diffraction problems by open
surfaces.

\noindent
{\bf Acknowledgments.} The authors gratefully acknowledge support from
  AFOSR, NSF, JPL and the Betty and Gordon Moore Foundation.

\appendix
\section{Expression of the operator $\mathbf{N}_\omega$ in terms of
  tangential derivatives}\label{Appendix_N} In this section, we provide a
proof of Lemma~\ref{Ndec}.
\begin{proof}
It suffices to show that the operators on the left- and right-hand sides of
equation~\eqref{Ndec_eq} coincide when applied to any smooth function $\psi$
defined on $\Gamma$. Following the derivation in~\cite{ColtonKress1} for the
closed-surface case, we define the weighted double layer operator
\begin{equation}\label{DomegaDef}
\mathbf{D}_\omega[\psi](\mathbf{r})=\int_\Gamma \frac{\partial G_k(\mathbf{r},\mathbf{r}')}{\partial \mathbf{n}_\mathbf{r}'}\psi(\mathbf{r}') \omega(\mathbf{r}') dS',
\end{equation}
and we evaluate the limit of its gradient as $\mathbf{r}$ tends to
$\Gamma$. For $\mathbf{r}$ outside $\Gamma$,~\eqref{DomegaDef} can be
expressed in the form
\begin{equation}
\mathbf{D}_\omega[\psi](\mathbf{r})=-\mbox{div} \int_\Gamma G_k(\mathbf{r},\mathbf{r}') \psi(\mathbf{r}') \omega(\mathbf{r}') \textbf{n}_{\mathbf{r}'} dS'.
\end{equation}
Thus, using the identity $$\mbox{curl curl} A=-\Delta A +\nabla\mbox{div
}A,$$ we obtain
\begin{equation}\label{gradD}
\begin{split}
\nabla \mathbf{D}_\omega[\psi](\mathbf{r})=k^2\int_\Gamma
G_k(\mathbf{r},\mathbf{r}')\psi(\mathbf{r}')\omega(\mathbf{r}')\textbf{n}_{\mathbf{r}'}dS'\\
-\mbox{curl curl}\int_\Gamma G_k(\mathbf{r},\mathbf{r}')\psi(\mathbf{r}')
\mathbf{n}_{\mathbf{r}'}dS'.
\end{split}
\end{equation}
But, for $\mathbf{r}$ outside $\Gamma$ we have
\begin{equation}
\begin{split}
\mbox{curl}\int_\Gamma G_k(\mathbf{r},\mathbf{r}')\psi(\mathbf{r}')\textbf{n}_{\mathbf{r}'}dS'=\\ \int_\Gamma\left[\textbf{n}_{\mathbf{r}'},\psi(\mathbf{r}')\omega(\mathbf{r}')\nabla^s_{\mathbf{r}'}G_k(\mathbf{r},\mathbf{r}')\right]dS',
\end{split}
\end{equation}
where $[\,\cdot\, ,\cdot\,]$ and $\nabla^s$ denote the vector product and
surface gradient operator, respectively. Integrating by parts the surface
gradient (see e.g.~\cite[ eq. (2.2)]{ColtonKress1}) and noting that the
boundary terms vanish in view of the presence of the weight $\omega$, we
obtain
\begin{equation}
\begin{split}\label{curlInt}
\mbox{curl}\int_\Gamma
G_k(\mathbf{r},\mathbf{r}')\psi(\mathbf{r}')\textbf{n}_{\mathbf{r}'}dS'=\\-\int_\Gamma
G_k(\mathbf{r},\mathbf{r}')\left[\textbf{n}_{\mathbf{r}'},\nabla^s\left(\psi(\mathbf{r}')\omega(\mathbf{r}')\right)\right]dS'.
\end{split}
\end{equation}
In the limit as $\mathbf{r}$ tends to an interior point in $\Gamma$ we
therefore obtain the expression
\begin{equation}\label{grad}
\begin{split}
\nabla \mathbf{D}_\omega[\psi](\mathbf{r})=k^2\int_\Gamma
G_k(\mathbf{r},\mathbf{r}')\psi(\mathbf{r}')\omega(\mathbf{r}')\textbf{n}_{\mathbf{r}'}dS'\\
+\mbox{p.v.}
\int_\Gamma\left[\nabla_{\mathbf{r}}G_k(\mathbf{r},\mathbf{r}'),\left[\textbf{n}_{\mathbf{r}'},\nabla^s\left(
\psi(\mathbf{r}')\omega(\mathbf{r}')\right)\right]\right]dS'\quad \quad
(\mathbf{r}\in\Gamma),
\end{split}
\end{equation}
in terms of a principal value integral, for the surface values of the
gradient of the double layer operator $\mathbf{D}_\omega$. Taking the scalar
product with $\textbf{n}_{\mathbf{r}}$ on both sides of~\eqref{grad} now
yields the desired result: equation~\eqref{Ndec_eq}.

\end{proof}

\section{Boundary-layer character of the inner integral in equation~\eqref{edge_polar}}\label{appendix_BL}
In order to demonstrate the difficulties inherent in the numerical
evaluation of the outer integral in equation~\eqref{edge_polar} we consider
the integration problem
\begin{equation}\label{demo_prob}
\int_0^\pi \tilde I_\rho(v_0,\rho_0,\theta)d\theta,
\end{equation}
in which the $(u_0,v_0)$-dependent inner integral in~\eqref{edge_polar} is
substituted by the $v_0$-dependent integral
\begin{equation}\label{BL_exp}
\tilde I_\rho(v_0,\rho_0,\theta)=\int_{-\frac{v_0}{\sin\theta}}^\infty \tilde
H_{\rho_0}(\rho,\theta)\frac{d\rho}{\sqrt{v_0 +\rho\sin\theta}}\quad,\quad 0\leq
\theta \leq\pi.
\end{equation}
Here
\begin{equation}
\tilde{H}_{\rho_0}(\rho,\theta)=\left\{ \begin{array}{ll} 1, & |\rho| < \rho_0\\ 0, &
  |\rho| \geq  \rho_0, \end{array}\right.
\end{equation}
so that, in the present example, $\rho_0$ is the polar-integration
radius. (The inner integral in~\eqref{edge_polar} varies smoothly with
$u_0$, and, thus, the $u_0$ dependence does not need to be built into the
present analogy.)  The integral $\tilde{I}_\rho(v_0, \rho_0,\theta)$ is
given by
\begin{equation}\label{BL_exp}
\tilde I_\rho(v_0,\rho_0,\theta) = \left\{\begin{array}{ll} \frac{2\rho_0}{\sqrt{v_0 +
\rho_0\sin\theta} + \sqrt{v_0-\rho_0\sin\theta} } & \quad\mbox{if}\quad \sin\theta
\leq \frac{v_0}{\rho_0} \\ &\\ 2\frac{\sqrt{\frac{v_0}{\sin\theta} +
\rho_0}}{\sqrt{\sin\theta}} &\quad\mbox{if}\quad\sin\theta > \frac{v_0}{\rho_0}.
\end{array} \right. 
\end{equation}
Clearly, as $v_0$ tends to zero, $\tilde{I}(v_0,\rho_0,\theta)$ becomes
increasingly singular (as demonstrated in the left portion of
Figure~\ref{BLRaw_Fig}): in view of the last equation on the right hand side
of~\eqref{BL_exp}, we have
\begin{equation}\label{I_rho_asym}
\lim_{v_0\rightarrow 0}
\tilde{I}_\rho(v_0,\rho_0,\theta)=\left(\frac{1}{\sqrt{\sin\theta}}\right).
\end{equation} 
To treat the singularity in~\eqref{I_rho_asym} we introduce quadratic
changes of variables in the $\theta$ integration in
equation~\eqref{demo_prob}---of the form $\theta=\alpha^2$ in the interval
$[0,\frac{\Pi}{2}]$ and $\theta=\Pi-\alpha^2$ in the interval
$[\frac{\pi}{2},\pi]$). As a result of these operations we obtain bounded
integrands: for example, the integrand resulting from the first of these
changes of variables is $\tilde{J}(v_0,\rho_0,\alpha)= \alpha
\tilde{I}(v_0,\rho_0,\alpha^2)$, which is a bounded function of $\alpha$. This
integrand is depicted on the right portion of Figure~\ref{BLRaw_Fig};
clearly $\tilde{J}(v_0,\rho_0,\alpha)$ develops a boundary layer as $v_0$ tends
to zero.

The two changes of variables mentioned above result in integrals over the
domain $[0,\sqrt{\frac{\pi}{2}}]$), and in both cases boundary layers result
at and around $\alpha =0$. To resolve these boundary layers we decompose the
integration interval into two sub-intervals, namely $[0,\alpha^*(v_0,\rho_0)]$ and
$[\alpha^*(v_0,\rho_0),\sqrt{\frac{\pi}{2}}]$.  Here, for a given value of $v_0$,
the point $\alpha^*(v_0,\rho_0)$ is chosen to lie to right of the coordinate for
which the peak occurs in the right portion of Figure~\ref{BLRaw_Fig}, in
such a way that the slope of the function $\tilde{J}(v_0,\rho_0,\alpha)$ as a
function of $\alpha$ at $\alpha=\alpha^*(v_0,\rho_0)$ remains constant as $v_0$
approaches zero---with a slope that equals a certain user-prescribed
constant value. In practice we have found that the integral to the right of
the point $\alpha=\alpha^*(v_0,\rho_0)$ can be performed, with fixed accuracy, by
means of a number of discretization points that grows very slowly as $v_0\to
0$. (In our implementations we typically use a number of discretization
points to evaluate this integral that remains constant for all required
small values of $v_0$.) The evaluation of the integral on the left of the
point $\alpha=\alpha^*(v_0,\rho_0)$ with fixed accuracy requires a number of
discretization points that does grow somewhat faster, as $v_0\to 0$, than
the one on the right, but, we have found in practice that the latter
integral can be obtained with fixed accuracy by means of a number of
discretization points that grows only logarithmically with $v_0$ as $v_0\to
0$.

To obtain an approximate expression for $\alpha^*(v_0,\rho_0)$ we note that,
since
\begin{equation}\label{Jalpha}
\tilde{J}(v_0,\rho_0,\alpha)=\alpha\frac{\sqrt{v_0+\rho_0\sin\alpha^2}}{\sin\alpha^2}\quad\mbox{for}\quad
\quad \alpha > \sqrt{ \arcsin( \frac{v_0}{\rho_0})},
\end{equation}
for $\alpha \ll 1$ and for sufficiently small values of $v_0$ (such that the
inequality constraint in~\eqref{Jalpha} is satisfied) we have $\alpha^2~\sim
\sin \alpha^2$, and thus letting $\eta = \frac{v_0}{\rho_0}$,
\begin{equation}\label{J_asym}
\tilde{J}(v_0,\rho_0,\alpha)\sim \sqrt{\rho_0}f_n(\alpha)\quad\mbox{where}\quad
    f_\eta(\alpha)=\frac{\sqrt{\eta + \alpha^2}}{\alpha}.
\end{equation}
It follows that, for a given constant $C>0$, the fixed-slope point
$\alpha_\eta$ for which $f_\eta'\left(\alpha_\eta\right)=-C$ is
approximately given as a root of the equation
\begin{equation}
-\frac{\eta}{\alpha_\eta^2\sqrt{\eta+\alpha_\eta^2}}=-C,\quad \mbox{or
 equivalently}, \quad \frac{\alpha_\eta^4}{\eta} +
 \frac{\alpha_\eta^6}{\eta^2} - \frac1{C^2}= 0.
\end{equation}
Clearly, thus, an approximation of the quantity $\alpha_\eta^2$ can be
obtained, in closed form, as a root of a certain polynomial of degree
three. A Taylor expansion of the resulting root as a function of $\eta$
around $\eta=0$ shows that
\begin{equation}
\alpha_\eta=O\left(\eta^\frac{1}{3}\right)\quad\mbox{as} \quad \eta
\rightarrow 0,
\end{equation}
and, since $\eta = \frac{v_0}{\rho_0}$, it follows that the constant slope point
$\alpha^*(v_0, \rho_0)$ is given, for each constant $\rho_0$, by
\begin{equation}
\alpha^*(v_0,\rho_0)\sim v_0^\frac{1}{3}\quad\mbox{as}\quad v_0\rightarrow 0.
\end{equation}.

Since the function $H = H(u_0,v_0,\rho,\theta)$ in
equation~\eqref{edge_polar} is modulated by a smooth windowing function that
is akin to the ``discontinuous window function'' $\tilde H_{\rho_0}$ in
equation~\eqref{BL_exp}, it is reasonable to expect that the inner
$\rho$-integral in~\eqref{edge_polar} gives rise to a $\alpha$-integrand
which develops a similarly behaved, albeit smoother, boundary-layer.  We
illustrate this in Figure~\ref{BLSmooth_Fig} (the left portion of which
should be compared to the right portion of Figure~\ref{BLRaw_Fig}), which
displays the function
\begin{equation}
J(v_0,\rho_0,\alpha)=\alpha \int_{-\frac{v_0}{\sin\alpha^2}}^\infty
H(v_0,\rho,\alpha^2)\frac{\rho_0\rho}{\sqrt{v_0+\rho_0\sin\alpha^2}},
\end{equation} 
where 
\begin{equation}\label{Hsmooth}
H_{\rho_0}(\rho,\theta)=W(\frac{\rho}{\rho_0}),\quad W(\rho)=\left\{ \begin{array}{ll} e^{-\frac{1}{1-\rho^2}}, & \rho < 1\\ 0, & \rho \geq
1.\end{array}\right.
\end{equation}
It follows that the $\theta$-integration strategy outlined above in this
section for the function $\tilde I$ (based on the change of variables
$\theta=\alpha^2$ and partitioning of integration intervals at the point
$\alpha =\alpha^*$) applies to the integrand given by the inner integral in
equation~\eqref{edge_polar}: this strategy is incorporated as part of our
algorithm, and is thus demonstrated in the numerical examples presented in
Section~\ref{sec_numerical}.

\begin{figure}[h]
\center
\includegraphics[scale=0.32]{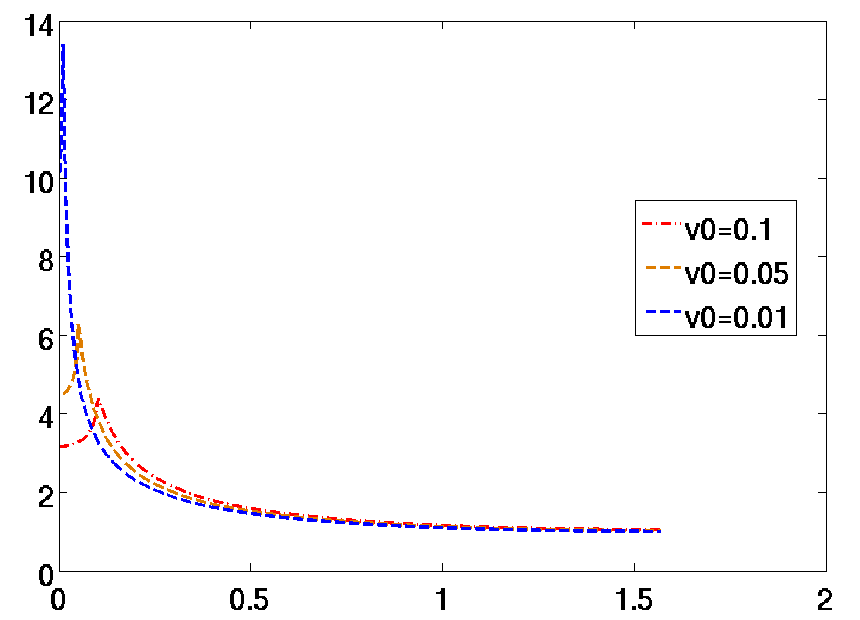}
\includegraphics[scale=0.32]{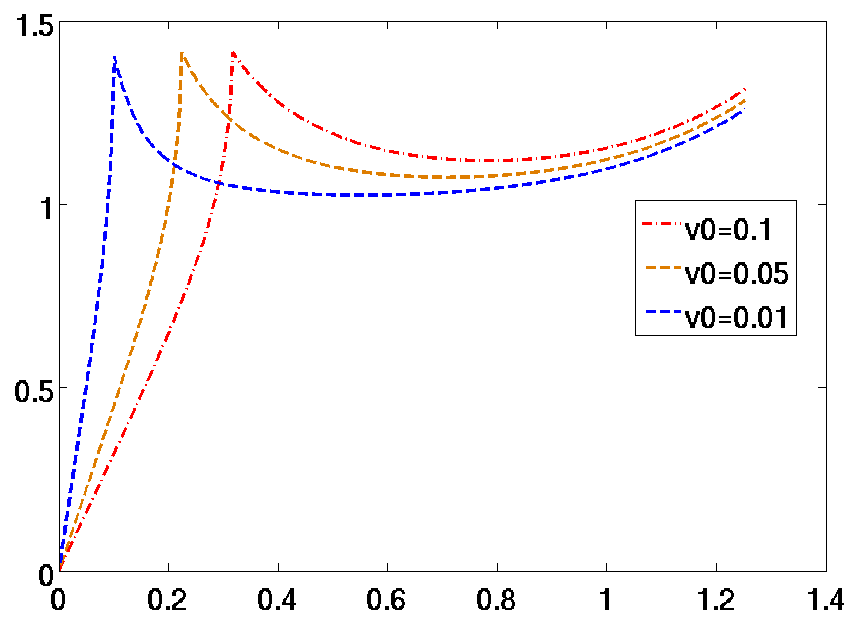}
\caption{Left: boundary layer for $\tilde{I}(v_0,\rho_0,\theta)$ on the
  interval $[0, \frac{\pi}{2}]$. Right: quadratic regularization
  $\tilde{J}(v_0,\rho_0,\alpha)=\alpha \tilde{I}(v_0,\rho_0,\alpha^2)$, where $\alpha\in[0,\sqrt{\frac{\pi}{2}}]$.  \label{BLRaw_Fig}}
\end{figure}

\begin{figure}[h]
\center
\includegraphics[scale=0.32]{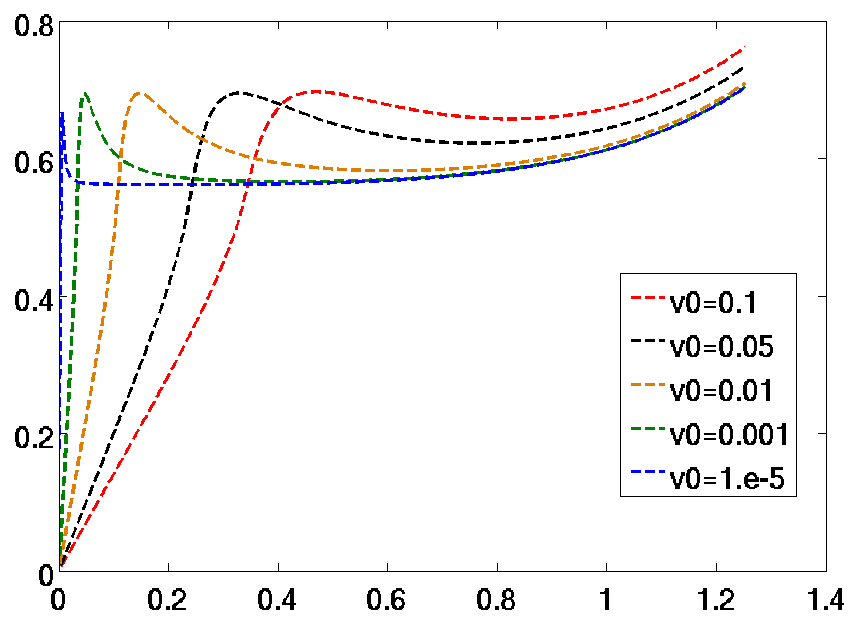}
\includegraphics[scale=0.32]{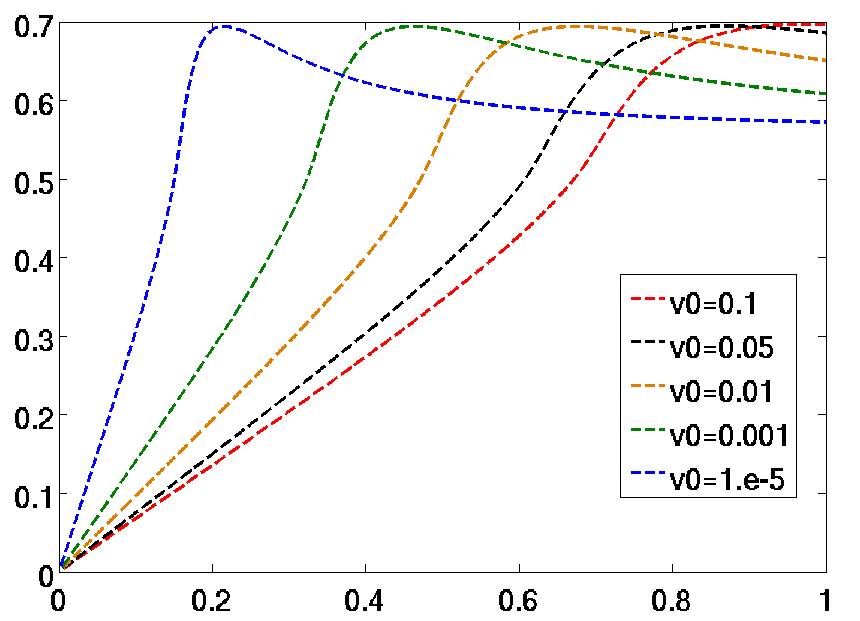}
\caption{Left: numerical values of $J(v_0,\rho_0,\alpha)$ for the smooth
  function $H_{\rho_0}(\rho,\theta)$ given in
  equation~\eqref{Hsmooth}. Right: normalized view on the interval
  $[0,\alpha^*(v_0)]$ for various values of $v_0$.  \label{BLSmooth_Fig}}
\end{figure}

\section{Babinet Principle for Acoustic Problems}\label{appendix_babinet}
As mentioned in Section~\ref{apertures}, for ease of reference, in this
appendix we present a derivation of the Babinet principle for scalar waves;
see also~\cite{BouwkampReview}. Let $\Gamma$ be an open (bounded) flat
screen which lies in the $z=0$ plane, and let $u^i$ and $u^s_{\Gamma}$ denote 
the incident wave (with sources contained in the
semi-space $z<0$), and the corresponding field scattered by $\Gamma$ under
the Dirichlet boundary condition given by
equation~\eqref{b_conds} with $f=-u^{i}_{|_\Gamma}$. The corresponding total
field is denoted as $u^T_\Gamma=u^i+u^s_\Gamma$.

Calling $\Gamma^c$ the complement of $\Gamma$ in the $z=0$ plane, let
$v^s_{\Gamma^c}$ denote the field scattered by $\Gamma^c$ under Neumann
boundary conditions, and let $v^T_{\Gamma^c}$ denote the corresponding total
field. We now establish the Babinet principle that relates the
Dirichlet-screen problem to the Neumann aperture problem, namely
\begin{equation}\label{babinet_form}
u^T_{\Gamma} + v^T_{\Gamma^c} = u^i\quad\mbox{for}\quad z>0,
\end{equation}
with an associated formula, given below, for $z<0$. The corresponding
Babinet principle relating the Neumann-Screen problem to the
Dirichlet-aperture problem follows similarly.

In the Dirichlet-screen/Neumann-aperture problem the total field
$v^T_{\Gamma^c}$ satisfies $v^T_{\Gamma^c} = u^i$ on $\Gamma$---since
$v^T_{\Gamma^c}=u^i + v^s_{\Gamma^c}$, and since $v^s_{\Gamma^c}$ is an odd
function of $z$ (as it equals a double-layer potential with source density
on $\Gamma^c$).  Thus, defining $w(x,y,z)=v^T_{\Gamma^c}(x,y,z)$ for $z>0$,
and $w(x,y,z)=v^T_{\Gamma^c}(x,y,-z)$ for $z<0$, we see that $w$ satisfies
the following properties
\begin{itemize}
\item[---] The boundary values of $w$ on $\Gamma$ satisfy $w_{|_\Gamma} =
 u^i_{|_\Gamma}$;
\item[---] $w$ is a radiative solution in $\mathbb{R}^3$ (since
  $v^T_{\Gamma^c} $ is radiating behind the screen)
\item[---] $w$ is continuous across $\Gamma^c$ (by definition) and its
  normal derivative across $\Gamma^c$ is continuous (since $v^T_{\Gamma^c}$
  satisfies homogeneous Neumann boundary conditions on $\Gamma^c$).
\end{itemize}
It follows that $w=-u^s_{\Gamma}$ (by uniqueness of solution to the
Dirichlet problem on $\Gamma$) and, therefore, that
$v^T_{\Gamma^c}=-u^s_\Gamma$ for $z>0$ or, in other words, that
$v^s_{\Gamma^c} = -u^s_\Gamma-u^i$ for $z>0$---and thus
equation~\eqref{babinet_form} follows. Since, as stated above,
$v^s_{\Gamma^c}$ is an odd function, its values in the region $z<0$ follow
by symmetry.

\bibliographystyle{plain}
\bibliography{./Screens}

\end{document}